\numberwithin{figure}{section}
\begin{document}

\newtheorem{theorem}{Theorem}[section]
\newtheorem{corollary}[theorem]{Corollary}
\newtheorem{lemma}[theorem]{Lemma}
\newtheorem{proposition}[theorem]{Proposition}
\newtheorem{conjecture}[theorem]{Conjecture}
\newtheorem*{problem}{Problem}
\theoremstyle{definition}
\newtheorem{definition}[theorem]{Definition}
\theoremstyle{definition}
\newtheorem{remark}[theorem]{Remark}
\newcommand{\ep}{\varepsilon}
\newcommand{\eps}[1]{{#1}_{\varepsilon}}

\def\R{\mathbb{R}}
\def\Rp{\mathbb{R}^+}
\def\N{\mathbb{N}}
\def\Np{\mathbb{N}^+}
\def\Obj{\mathcal{O}}
\def\dOO{\partial\Omega_\mathcal{O}}
\def\OO{\Omega_{\mathcal{O}}}
\newcommand{\Int}[4]{\int_{#1}^{#2}\! #3 \, #4}
\newcommand{\pair}[2]{\left\langle #1 , #2 \right\rangle}
\newcommand{\map}[3]{#1 : #2 \rightarrow #3}

\title{Modelling with measures: Approximation of a mass-emitting object by a point source}
\author{Joep H.M.~Evers\thanks{Corresponding author; email: \texttt{j.h.m.evers@tue.nl}} \thanks{Institute for Complex Molecular Systems \& Centre for Analysis, Scientific computing and Applications, Eindhoven University of Technology, P.O.~Box 513, 5600 MB Eindhoven, The Netherlands. JE is financially supported by the Netherlands Organisation for Scientific Research (NWO), Graduate Programme 2010.} \  \  Sander C.~Hille\thanks{Mathematical Institute, Leiden University, P.O.~Box 9512, 2300 RA Leiden, The Netherlands}  \  \ Adrian Muntean\thanks{Institute for Complex Molecular Systems \& Centre for Analysis, Scientific computing and Applications, Eindhoven University of Technology, P.O.~Box 513, 5600 MB Eindhoven, The Netherlands}}
\date{\today}
\maketitle

\begin{abstract}
We consider a linear diffusion equation on $\Omega:=\R^2\setminus\overline{\OO}$, where $\OO$ is a bounded domain. The time-dependent flux on the boundary $\Gamma:=\dOO$ is prescribed. The aim of the paper is to approximate the dynamics by the solution of the diffusion equation on the whole of $\R^2$ with a measure-valued point source in the origin and provide estimates for the quality of approximation. For all time $t$, we derive an $L^2([0,t];L^2(\Gamma))$-bound on the difference in flux on the boundary. Moreover, we derive for all $t>0$ an $L^2(\Omega)$-bound and an $L^2([0,t];H^1(\Omega))$-bound for the difference of the solutions to the two models.\\
\\
\textbf{Keywords} : Point source, model reduction, boundary exchange, diffusion, quantitative flux estimates, modelling with measures.\\
\\
\textbf{MSC 2010} : Primary: 35K05, 35A35; Secondary: 35B45.
\end{abstract}

\section{Introduction}
``What is the force on a test charge due to a single point charge $q$ which is at rest a distance $r$ away?" is a common type of question in textbooks about electromagnetism (e.g.~\cite{Griffiths}, p.~59). In reality there is of course no such thing as a point charge having no volume. This is just a simplification due to the fact that the volume of the charged particle is very small compared to the other typical length scales in the system. Throughout physics it is common practice to replace objects of negligible size by point masses. For instance, grains or colloids in a solution \cite{Jaeder-Nagel}, crowd dynamics \cite{Gulikers_Jstat}, electrostatics \cite{Jackson}, defects in crystalline structures \cite{LeBris, vMeurs}. Of particular interest is the setting in which the exchange of mass, energy etc.~between the interior and the exterior of the object takes place at its boundary. In this case the object is approximated not by a mere point mass, but by a point source. Experimental evidence suggests that this example of `modelling with measures' is often a good approximation to the original (spatially extended) system. In this paper, we consider the problem of quantifying the accuracy of this type of approximation, focussing on a simple scenario.\\
\\
In $\R^2$, we consider an object of fixed shape and position and of finite size. Outside the object there is a concentration of mass that evolves by diffusion. On the boundary of the object there is prescribed mass flux in normal direction. This flux is a simplistic way of describing the result of processes that occur in the interior of the object. We wish to approximate this object by a point source. To this aim we replace the original diffusion equation on the exterior domain $\Omega$ by a diffusion equation on the whole of $\R^2$ with a Dirac measure included at its right-hand side. The exact formulation of the equations will be made clear in Section \ref{sec:two problems}.\\
\\
This is a first step towards modelling and analysing the mass distribution dynamics in realistic settings involving a large number of small objects  moving around in a bounded domain while exchanging mass. Our motivation comes from the intracellular transport of chemical compounds in vesicles, like neurotransmitters in neurons (cf.~\cite{Lin-Edwards:1997}) or the hypothetical vesicular transport mechanism for the plant hormone auxin proposed in \cite{Baluska} as an alternative to the conventional auxin transport paradigm (in analogy to neurotransmitters). Auxin is a crucial molecule regulating growth and shape in plants. The vesicles are small membrane-bound balls covered by specific transmembrane transporter proteins that take up auxin from the surrounding cytoplasm. The vesicles are driven by molecular motors over a network of intracellular filaments \cite{Hirokawa_ea,Raven}, e.g. from one end of the cell to the other as in Polar Auxin Transport (PAT). Experimental investigations of PAT in {\em Chara} species \cite{Boot} revealed that neither diffusion nor cytoplasmic streaming can be the driving mechanism of PAT in the long (3-8 cm) internodal {\em Chara} cells. See \cite{Boot,Raven} for further discussion and an overview.\\
\\
A substantial amount of mathematical modelling efforts on PAT have focussed on pattern formation in plant cell tissues (see \cite{vBerkel,Kramer,Merks} and the references cited therein). Upscaling to an effective macroscopic continuum description for transport at tissue level was considered in \cite{Chavarria}. All models are based however on the assumption of diffusion as intracellular transport mechanism for auxin. Ultimately, we aim at obtaining a convenient mathematical description of the vesicle-driven transport dynamics \textit{within} a cell, in particular in terms of an effective continuum model, which is needed to replace diffusion in an upscaling argument similar to \cite{Chavarria}. In view of (the absence of) relevant mathematical literature, this perspective seems to be rather unexplored.\\
\\
Why do we insist on introducing measures to this problem? This modelling strategy is especially useful once we wish to describe the interaction between multiple moving objects (vesicles). We expect the mathematical description to be much simpler in terms of discrete measures (i.e.~the weighted sum of Dirac measures) and the analysis and numerical approximation likewise (see, for instance,  \cite{Rude,Seidman} for a related case). But before we can go to this advanced setting, we first need to investigate the quality of the approximation for a simple reference scenario; this is the main concern of this paper.\\
\\
After the aforementioned overview of model equations in Section \ref{sec:two problems}, we summarize in Section \ref{sec:mainresults} the main (boundedness) results of this paper, followed by some useful preliminaries in Section \ref{sec:preliminaries}. In Section \ref{sec:flux estimates} we show boundedness of the difference in the flux of the full problem (including the finite-size object) and the flux of the reduced problem (including the point source). This result is used in Section \ref{sec:exterior estimates}, where we estimate the difference between the two problems' solutions on the exterior domain.

\section{Two problems}\label{sec:two problems}
Let $\OO\in\R^2$ be an open and bounded domain, such that its boundary $\Gamma:=\dOO$ is $C^2$ and has finite length. This set denotes the interior of an object $\Obj$ with mass-exchange at its boundary. We assume $0\in\OO$. Let $\Omega$ denote the exterior of $\Obj$. That is, $\Omega:=\R^2\setminus\overline{\OO}$. See Figure \ref{fig:domain Omega} for a sketch of the geometry.\\
\\
For given initial condition $\map{u_0}{\Omega}{\Rp}$ and given flux $\map{\phi}{\Gamma\times[0,T]}{\R}$, we consider the problem
\begin{align}\label{eqn:system Full problem}
\left\{
  \begin{array}{ll}
    \dfrac{\partial u}{\partial t}= d \Delta u, & \hbox{on $\Omega\times\Rp$;} \\
    u(0) = u_0, & \hbox{on $\Omega$;} \\
    d\nabla u\cdot n = \phi, & \hbox{on $\Gamma\times\Rp$.}
  \end{array}
\right.
\end{align}
Here, $d>0$ denotes the diffusion coefficient, which is fixed throughout this paper. The vector $n$ denotes the unit normal pointing outwards on $\Gamma$ (so \textit{into} $\OO$), and $\phi$ is the \textit{influx} of $u$ w.r.t.~$\Omega$. Positive $\phi$ corresponds to flux in the direction of $-n$.\\
\\
Use $\map{v_0}{\overline{\OO}}{\Rp}$ to define $\map{\hat{u}_0}{\R^2}{\Rp}$, given by
\begin{align}\label{eqn:extended initial condition}
\hat{u}_0 := \left\{
  \begin{array}{ll}
    u_0, & \hbox{on $\Omega$;} \\
    v_0, & \hbox{on $\overline{\OO}$,}
  \end{array}
\right.
\end{align}
which is an extension of $u_0$ to the whole of $\R^2$. The aim of the paper is to quantify the quality of approximation of the solution of \eqref{eqn:system Full problem} (with an appropriate solution concept, see Section \ref{sec:soln reg} below) with the restriction to $\Omega$ of the \textit{mild solution} of the problem
\begin{align}\label{eqn:system point source uhat}
\left\{
  \begin{array}{ll}
    \dfrac{\partial \hat{u}}{\partial t}= d \Delta \hat{u} + \bar{\phi}\delta_0, & \hbox{on $\R^2\times\Rp$;} \\
    \hat{u}(0) = \hat{u}_0, & \hbox{on $\R^2$,}
  \end{array}
\right.
\end{align}
(see also Section \ref{sec:soln reg}).
\begin{remark}
Typically, $\Obj$ is \textit{small} (we are deliberately vague in what sense), but even if that is not the case, the approach of this paper gives information about how much the solutions of the two problems deviate on $\Omega$. It is not our objective to investigate the behaviour of \eqref{eqn:system Full problem} in the limit $|\Obj|\to 0$. $\Obj$ keeps physical proportions.
\end{remark}
\begin{remark}
In \eqref{eqn:system point source uhat}, we have introduced a mapping $\map{\bar{\phi}}{\Rp}{\R}$ which represents the magnitude of the mass source. A measure-valued source was treated, for instance, in \cite{Seidman} (in the context of numerical approximation schemes) or in \cite{Boccardo_ea}; see also \cite{Lions} for more background on the solvability of such evolution equations.
\end{remark}
\begin{remark}
Problem \eqref{eqn:system point source uhat} is posed on the whole of $\R^2$. The boundary $\Gamma$ has no physical meaning in this problem; see Figure \ref{fig:domain point source}. However, the flux on this imaginary curve will be used in later estimates.
\end{remark}
\begin{figure}
        \centering
        \begin{subfigure}[b]{0.45\textwidth}
                \begin{tikzpicture}[scale=0.5, >= latex]
                \shade[shading=radial, even odd rule, inner color=gray!40!white]
                (-5,-5) rectangle (5,5)
                (0,2.5) to [out=0, in=135] (2,1.5) to [out=315, in=90] (3,0.5) to [out=270, in=45] (1.5,-1.5) to [out=225, in=0] (0,-2.5) to [out=180, in=315] (-1,-2) to [out=135, in=225] (-1,1.5) to [out=45, in=180] (0,2.5);
                \fill[gray!40!white] (0,2.5) to [out=0, in=135] (2,1.5) to [out=315, in=90] (3,0.5) to [out=270, in=45] (1.5,-1.5) to [out=225, in=0] (0,-2.5) to [out=180, in=315] (-1,-2) to [out=135, in=225] (-1,1.5) to [out=45, in=180] (0,2.5);
                \draw[->, color=gray!60!white] (0,-5.5)--(0,5.5);
                \draw[->, color=gray!60!white] (-5.5,0)--(5.5,0);
                \draw[line width = 1.5 pt] (0,2.5) to [out=0, in=135] (2,1.5) to [out=315, in=90] (3,0.5) to [out=270, in=45] (1.5,-1.5) to [out=225, in=0] (0,-2.5) to [out=180, in=315] (-1,-2) to [out=135, in=225] (-1,1.5) to [out=45, in=180] (0,2.5);
                \node at (1,-0.5){$\OO$};
                \node at (3,1.5){$u_0$};
                \node at(-2.5,-1){$\Omega$};
                \draw[->] (-1,-2)--(-0.5,-1.5) node[anchor=south west]{$n$};
                \draw[->] (1.5,-1.5)--(2.5,-2.5) node[anchor=south west]{$\phi$};
                \draw (-1,1.5) to [out=135, in=270] (-1.25,2) node[anchor=south east]{$\Gamma$};
                \end{tikzpicture}
                \caption{Original domain}
                \label{fig:domain Omega}
        \end{subfigure}%
        ~ %add desired spacing between images, e. g. ~, \quad, \qquad etc.
          %(or a blank line to force the subfigure onto a new line)
        \begin{subfigure}[b]{0.45\textwidth}
                \begin{tikzpicture}[scale=0.5, >= latex]
                \shade[shading=radial, inner color=gray!40!white]
                (-5,-5) rectangle (5,5);
                \draw[->, color=gray!60!white] (0,-5.5)--(0,5.5);
                \draw[->, color=gray!60!white] (-5.5,0)--(5.5,0);
                \draw[loosely dotted] (0,2.5) to [out=0, in=135] (2,1.5) to [out=315, in=90] (3,0.5) to [out=270, in=45] (1.5,-1.5) to [out=225, in=0] (0,-2.5) to [out=180, in=315] (-1,-2) to [out=135, in=225] (-1,1.5) to [out=45, in=180] (0,2.5);
                \draw[->] (0,0)--(1,1);
                \draw[->] (0,0)--(1,-1);
                \draw[->] (0,0)--(-1,1);
                \draw[->] (0,0)--(-1,-1);
                \draw (0,0) node[fill, circle, scale=0.7]{};
                \draw (0,0) node[anchor=west]{$\hspace{5pt}\bar{\phi}\delta_0$};
                \node at (3,1.5){$u_0$};
                \node at (2.2,0.7){$v_0$};
                \end{tikzpicture}
                \caption{Extended domain}
                \label{fig:domain point source}
        \end{subfigure}
        \caption{(A): Typical example of the original domain $\Omega$ outside the object $\Obj$, on which $u$ evolves according to \eqref{eqn:system Full problem} starting from initial condition $u_0$. Also, $\phi$ and $n$, related to the boundary condition on $\Gamma$, are indicated. (B): Domain for the reduced problem associated to (A). $\Gamma$ is now an imaginary curve within the domain (to be used later). The initial conditions $u_0$ and $v_0$ hold outside and inside $\Gamma$, respectively. The point source of magnitude $\bar{\phi}$ is indicated in the origin.}\label{fig:domains}
\end{figure}
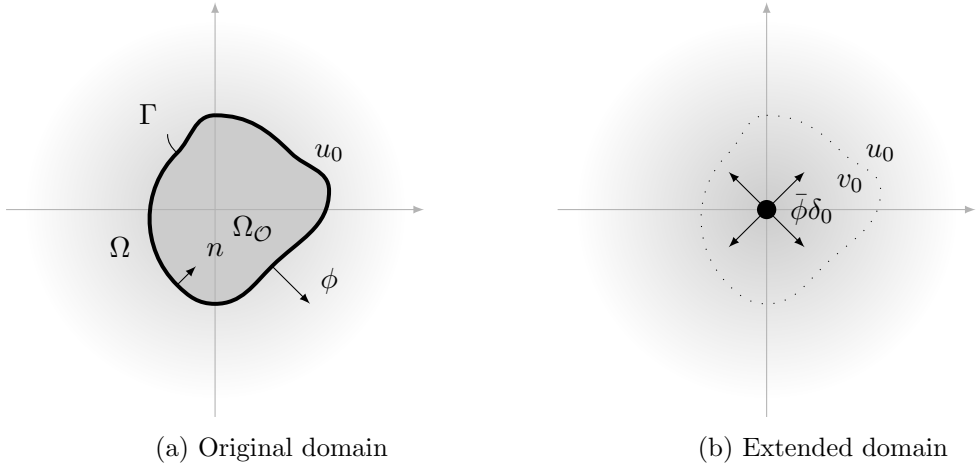

\section{Summary of the main results}\label{sec:mainresults}
In Section \ref{sec:soln reg} we shall use available results on maximal regularity that establish the existence of a unique solution $u$ to Problem \eqref{eqn:system Full problem} in the sense of $L^2(\Omega)$-valued distributions, provided the initial condition $u_0\in H^1(\Omega)$ and the prescribed flux $\phi\in H^1([0,T], L^2(\Gamma))\cap L^2([0,T],H^1(\Gamma))$. Mild solutions to Problem \eqref{eqn:system point source uhat} exist in a suitable Banach space containing the finite Borel measures for any initial {\em measure}, provided $\bar{\phi}\in L^1_{\mathrm{loc}}(\Rp)$ (see Section \ref{sec:soln reg}). We show that for more regular initial condition $\hat{u}_0\in H^1(\R^2)$ and flux from the source $\bar{\phi}\in H^1([0,T])$, the restriction of the mild solution $\hat{u}$ to $\Omega$ is as regular as $u$ on $\Omega$ (Theorem \ref{thm:reg}), namely
\[
u, \hat{u} \in H^1([0,T], L^2(\Omega))\cap L^2([0,T],H^2(\Omega)).
\]
Consequently, the time-averaged deviation between the prescribed flux $\phi$ on $\Gamma$ in Problem \eqref{eqn:system Full problem} and the flux on $\Gamma$ generated by the solution to Problem \eqref{eqn:system point source uhat} with flux $\bar{\phi}$ at $0$, i.e.
\begin{equation}\label{eq:def c-star}
c^*(t):=\Int{0}{t}{\|\phi(\tau)-d\nabla\hat{u}(\tau)\cdot n\|^2_{L^2(\Gamma)}}{d\tau}
\end{equation}
is finite for all $t\geq 0$. In Section \ref{sec:flux estimates} we derive an upper bound on $c^*(t)$, see Theorem \ref{thm:claim boundary} in terms of the data for Problems \eqref{eqn:system Full problem} and \eqref{eqn:system point source uhat}.
\vskip 0.2cm

\noindent Our main result is the following:
\begin{theorem}\label{thm:estimate u exterior}
Let $T>0$ and let the data for Problems \eqref{eqn:system Full problem} and \eqref{eqn:system point source uhat} satisfy $u_0\in H^1(\Omega)$, $\phi\in H^1([0,T], L^2(\Gamma))\cap L^2([0,T],H^1(\Gamma))$, $\bar{\phi}\in H^1([0,T])$ and $\hat{u}_0\in H^1(\R^2)$ is such that $\nabla\hat{u}_0\in L^p(\R^2)$ for some $2<p<\infty$. Then the unique solutions $u$ and $\hat{u}$ to \eqref{eqn:system Full problem} and \eqref{eqn:system point source uhat} are such, that for all $\ep\in(0,2d)$ there are $c_1, c_2>0$ such that
\begin{align}
\|u(\cdot,t)-\hat{u}(\cdot,t)\|^2_{L^2(\Omega)} &\leq c_1\,c^*(t)\,\textrm{e}^{\ep t},\text{ and}\label{eqn:thm bound L2}\\
\Int{0}{t}{\|u-\hat{u}\|^2_{H^1(\Omega)}}{} &\leq c_2\,c^*(t)\,\textrm{e}^{\ep t}.\label{eqn:thm bound H1 integrated}
\end{align}
for all $0<t\leq T$. The constants depend on $\Omega$, $d$ and $\ep$.
\end{theorem}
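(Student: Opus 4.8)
The plan is to run a single energy estimate for the difference $w := u - \hat{u}$ restricted to the exterior domain $\Omega$. First I would note that, since $\delta_0$ is supported at $0\in\OO$ and $\mathrm{dist}(0,\Omega)>0$ (as $\OO$ is open and contains $0$), the restriction of $\hat u$ to $\Omega$ solves the \emph{source-free} equation $\partial_t\hat u = d\Delta\hat u$ there: the Dirac-forced part of the mild solution is $\int_0^\tau\bar\phi(s)\,G_{d(\tau-s)}\,ds$ with $G$ the Gaussian kernel, which is smooth and caloric away from the origin. Moreover $\hat u_0 = u_0$ on $\Omega$ by \eqref{eqn:extended initial condition}. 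Hence $w$ satisfies, on $\Omega\times\Rp$,
\[
\partial_t w = d\Delta w,\qquad w(\cdot,0)=0\ \text{ on }\Omega,\qquad d\nabla w\cdot n = \phi - d\nabla\hat u\cdot n =: \psi\ \text{ on }\Gamma\times\Rp,
\]
and $\int_0^t\|\psi(\tau)\|_{L^2(\Gamma)}^2\,d\tau = c^*(t)$ by \eqref{eq:def c-star}. By Theorem \ref{thm:reg}, $u$ and $\hat u$ — hence $w$ — lie in $H^1([0,T],L^2(\Omega))\cap L^2([0,T],H^2(\Omega))\hookrightarrow C([0,T],H^1(\Omega))$, so $t\mapsto\|w(\cdot,t)\|_{L^2(\Omega)}^2$ is absolutely continuous, traces on $\Gamma$ are well defined, and the integrations by parts below are legitimate.

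Next I would derive the energy identity. Testing the equation for $w$ against $w$ on $\Omega\cap B_R$, applying Green's formula and integrating in time gives, for $0\le s\le t$,
\[
\tfrac12\|w(t)\|_{L^2(\Omega\cap B_R)}^2 - \tfrac12\|w(s)\|_{L^2(\Omega\cap B_R)}^2 + d\int_s^t\!\|\nabla w\|_{L^2(\Omega\cap B_R)}^2 = \int_s^t\!\!\int_\Gamma\psi\,w\,dS + d\int_s^t\!\!\int_{\partial B_R}\!(\nabla w\cdot\tfrac{x}{|x|})\,w\,dS.
\]
Since $w,\nabla w\in L^2([0,T]\times\Omega)$, a Fubini/coarea argument yields radii $R_k\to\infty$ along which the last integral tends to $0$; monotone convergence in the remaining terms then produces the energy identity on all of $\Omega$, with differential form $\tfrac12\tfrac{d}{dt}\|w\|_{L^2(\Omega)}^2 + d\|\nabla w\|_{L^2(\Omega)}^2 = \int_\Gamma\psi w\,dS$ for a.e. $t$. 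To control the boundary term I would use continuity of the trace operator, $\|w\|_{L^2(\Gamma)}\le C_{\mathrm{tr}}\|w\|_{H^1(\Omega)}$ with $C_{\mathrm{tr}}=C_{\mathrm{tr}}(\Omega)$ (a local statement, hence unaffected by $\Omega$ being unbounded), and then Young's inequality: for any $\ep>0$,
\[
2\int_\Gamma\psi w\,dS \le 2C_{\mathrm{tr}}\|\psi\|_{L^2(\Gamma)}\|w\|_{H^1(\Omega)} \le \frac{C_{\mathrm{tr}}^2}{\ep}\|\psi\|_{L^2(\Gamma)}^2 + \ep\|w\|_{L^2(\Omega)}^2 + \ep\|\nabla w\|_{L^2(\Omega)}^2.
\]
Doubling the identity and inserting this bound, the term $\ep\|\nabla w\|_{L^2(\Omega)}^2$ is absorbed by $2d\|\nabla w\|_{L^2(\Omega)}^2$ \emph{exactly because $\ep<2d$}, leaving
\[
\tfrac{d}{dt}\|w\|_{L^2(\Omega)}^2 + (2d-\ep)\|\nabla w\|_{L^2(\Omega)}^2 \le \ep\,\|w\|_{L^2(\Omega)}^2 + \frac{C_{\mathrm{tr}}^2}{\ep}\|\psi(\cdot,t)\|_{L^2(\Gamma)}^2.
\]

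From here the two conclusions follow routinely. Dropping the gradient term, Grönwall's inequality together with $w(\cdot,0)=0$ and monotonicity of $c^*$ gives $\|w(\cdot,t)\|_{L^2(\Omega)}^2 \le \frac{C_{\mathrm{tr}}^2}{\ep}e^{\ep t}\int_0^t\|\psi(\tau)\|_{L^2(\Gamma)}^2\,d\tau = \frac{C_{\mathrm{tr}}^2}{\ep}\,c^*(t)\,e^{\ep t}$, i.e. \eqref{eqn:thm bound L2} with $c_1=C_{\mathrm{tr}}^2/\ep$. For \eqref{eqn:thm bound H1 integrated} I would instead integrate the last displayed inequality over $[0,t]$, keep the gradient term, bound $\int_0^t\|w\|_{L^2(\Omega)}^2$ by $\frac{C_{\mathrm{tr}}^2}{\ep^2}c^*(t)e^{\ep t}$ using the $L^2$ estimate just obtained, divide by $2d-\ep$, and add the time-integrated $L^2$ bound; this gives \eqref{eqn:thm bound H1 integrated} with a constant $c_2$ depending only on $\Omega$, $d$, $\ep$ (note $c_1,c_2\to\infty$ as $\ep\to 0$). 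The step needing the most care — and the one I would flag as the main obstacle — is the rigorous passage to the energy identity on the unbounded domain, namely controlling the flux of $w$ through large circles $\partial B_R$; this is precisely where the spatial regularity $w(\cdot,t)\in H^1(\Omega)$ from Theorem \ref{thm:reg} (and the fact that $\hat u$ genuinely solves the homogeneous problem on $\Omega$ with a well-defined trace) is used. Everything else is a standard trace-plus-Grönwall argument, with $\ep<2d$ entering only to make the gradient absorption possible.
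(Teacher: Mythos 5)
Your proposal is correct and follows essentially the same route as the paper: an energy identity for $w=u-\hat{u}$ on $\Omega$ (using that $\hat u$ solves the homogeneous heat equation on $\Omega$, which is the last assertion of Theorem \ref{thm:reg}), followed by the boundary trace inequality, Young's inequality with $\ep<2d$ to absorb the gradient term, and Gr\"onwall, yielding the same constants $c_1=\bar c^2/\ep$ and a comparable $c_2$. The only difference is a technical one in justifying the energy identity on the unbounded domain: you truncate to $\Omega\cap B_R$ and kill the far-field flux along a sequence $R_k\to\infty$, whereas the paper works with the distributional pairing against $\psi\otimes h$ and extends it by continuity to $f\in L^1([0,T],H^1(\Omega))$ before substituting $f=(u-\hat u)h$; both are legitimate given the $H^1([0,T];L^2(\Omega))\cap L^2([0,T];H^2(\Omega))$ regularity.
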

\begin{remark}
Note that the initial condition $\hat{u}_0$ needs to be more regular than `just' $H^1(\R^2)$ as needed in the regularity result for $\hat{u}$. The flux estimates in Section \ref{sec:flux estimates} require $\nabla\hat{u}_0\in L^p(\R^2)$ with $2<p<\infty$. The Sobolev Embedding Theorem (cf. \cite{AdamsFournier}, Thrm. 4.12, p. 85) yields that $\hat{u}_0\in H^2(\R^2)$ is a sufficient condition to have the stronger result that $\hat{u}_0\in H^1(\R^2)\cap W^{1,p}(\R^2)$ for any $2<p<\infty$. In that case necessarily $u_0\in H^2(\Omega)$ too.
\end{remark}

An important characteristic of estimates \eqref{eqn:thm bound L2} and \eqref{eqn:thm bound H1 integrated} is that the upper bounds are linear in $c^*(t)$. This implies that, if we manage to enforce $c^*(t)$ to be small, then also the solutions $u$ and $\hat{u}$ are close (in the sense described above) on $\Omega$. At this point, we manage only to get a rough bound on $c^*(t)$, cf.~Theorem \ref{thm:claim boundary}, but we conjecture that a more sophisticated estimate is possible; see Section \ref{sec:conjecture}.

\section{Preliminaries}\label{sec:preliminaries}
We need a few fundamental results, before we can discuss the properties of solutions (Section \ref{sec:soln reg}) and the details of our results (Section \ref{sec:flux estimates} and further). We summarize these preliminaries in this section.
\begin{lemma}[Properties of the convolution, \cite{Folland} Propositions 8.8 and 8.9, p.~241]\label{lem:properties convolution}
Let $p,q\geq1$ be such that $1/p+1/q=1$. If $f\in L^p(\R^n)$ and $g\in L^q(\R^n)$, then
\begin{enumerate}
  \item $(f*g)(x)$ exists for all $x\in\R^n$;
  \item $f*g$ is bounded and uniformly continuous;
  \item $\|f*g\|_{L^\infty(\R^n)}\leq \|f\|_{L^p(\R^n)}\,\|g\|_{L^q(\R^n)}$.\label{lem:part inifinity bound, properties convolution}
\end{enumerate}
If moreover $p,q\in(1,\infty)$, then
\begin{enumerate}[resume]
  \item $f*g\in C_0(\R^n)$.
\end{enumerate}
Let $p,q,r\in[1,\infty]$ satisfy $1/p+1/q=1+1/r$. If $f\in L^p(\R^n)$ and $g\in L^q(\R^n)$, then
\begin{enumerate}[resume]
  \item $f*g\in L^r(\R^n)$;\label{lem:part in Lr, properties convolution}
  \item $\|f*g\|_{L^r(\R^n)}\leq \|f\|_{L^p(\R^n)}\,\|g\|_{L^q(\R^n)}$.\label{lem:part Lr bound, properties convolution}
\end{enumerate}
\end{lemma}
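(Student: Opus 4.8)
The plan is to treat the two blocks of the lemma separately: parts 1--4, which concern pointwise existence, boundedness, (uniform) continuity and decay of $f*g$ in the dual-exponent case $1/p+1/q=1$, and parts 5--6, which constitute Young's convolution inequality in the general case $1/p+1/q=1+1/r$. The first block rests almost entirely on Hölder's inequality, whereas the second requires a more careful three-exponent splitting of the integrand.

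For parts 1 and 3 I would apply Hölder's inequality pointwise in $x$: for fixed $x$, the function $y\mapsto f(x-y)$ lies in $L^p(\R^n)$ with the same norm as $f$, since reflection and translation are isometries on $L^p(\R^n)$, so
\[
|(f*g)(x)|\le \int_{\R^n}|f(x-y)|\,|g(y)|\,dy \le \|f(x-\cdot)\|_{L^p(\R^n)}\,\|g\|_{L^q(\R^n)} = \|f\|_{L^p(\R^n)}\,\|g\|_{L^q(\R^n)}.
\]
This simultaneously shows that the integral defining $(f*g)(x)$ converges absolutely for every $x$ and yields the bound in part 3. For the uniform continuity in part 2 I would estimate the increment by $|(f*g)(x+h)-(f*g)(x)| \le \|\tau_h f - f\|_{L^p(\R^n)}\,\|g\|_{L^q(\R^n)}$, where $\tau_h$ denotes translation by $h$, and then invoke continuity of translation in $L^p(\R^n)$ for $p<\infty$; since the bound is independent of $x$, the continuity is uniform. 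The endpoint $p=\infty$ (hence $q=1$) is handled by the symmetry $f*g=g*f$, moving the translation onto the $L^1(\R^n)$ factor. For part 4, assuming $p,q\in(1,\infty)$, I would use the density of $C_c(\R^n)$ in both $L^p(\R^n)$ and $L^q(\R^n)$: choosing $f_k\to f$ and $g_k\to g$ in the respective norms with $f_k,g_k$ compactly supported, each $f_k*g_k$ is continuous with compact support, hence lies in $C_0(\R^n)$, and by bilinearity together with the part-3 bound $f_k*g_k\to f*g$ uniformly; since $C_0(\R^n)$ is closed under the supremum norm, the limit $f*g$ belongs to $C_0(\R^n)$.

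The substantive part is Young's inequality (parts 5--6). With $1/p+1/q=1+1/r$ I would split the integrand as
\[
|f(x-y)|\,|g(y)| = \bigl(|f(x-y)|^{p}|g(y)|^{q}\bigr)^{1/r}\,|f(x-y)|^{1-p/r}\,|g(y)|^{1-q/r},
\]
and apply the three-factor Hölder inequality in the variable $y$ with exponents $r$, $\alpha:=pr/(r-p)$ and $\beta:=qr/(r-q)$, which satisfy $1/r+1/\alpha+1/\beta=1$ precisely because of the relation on $p,q,r$. This produces
\[
(|f|*|g|)(x) \le \Bigl(\int_{\R^n}|f(x-y)|^{p}|g(y)|^{q}\,dy\Bigr)^{1/r}\,\|f\|_{L^p(\R^n)}^{\,1-p/r}\,\|g\|_{L^q(\R^n)}^{\,1-q/r}.
\]
Raising to the power $r$, integrating in $x$, and evaluating the resulting double integral by Tonelli's theorem (which gives $\|f\|_{L^p(\R^n)}^{p}\|g\|_{L^q(\R^n)}^{q}$) yields $\|f*g\|_{L^r(\R^n)}^{r}\le \|f\|_{L^p(\R^n)}^{r}\|g\|_{L^q(\R^n)}^{r}$, which is exactly parts 6 and, via the finiteness of this norm, part 5 (the finiteness of the $L^r$ norm forces $f*g$ to be defined almost everywhere).

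The main obstacle I anticipate is bookkeeping rather than conceptual: I must verify joint measurability of $(x,y)\mapsto f(x-y)g(y)$ so that Tonelli applies, and I must treat the degenerate exponent cases separately. When $r=\infty$ the splitting collapses to the dual-exponent estimate already established in part 3; when $p=1$ (so $q=r$) or $q=1$ (so $p=r$) one of $\alpha,\beta$ becomes infinite and the corresponding Hölder factor reduces to the constant $1$, so these cases should be checked directly, where the inequality is Minkowski's integral inequality. A secondary subtlety is the reliance in part 2 on continuity of translation in $L^p(\R^n)$, which itself follows from the density of continuous compactly supported functions; I would invoke this as a standard fact.
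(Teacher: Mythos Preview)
Your argument is correct and is the standard textbook proof of these facts. Note, however, that the paper does not actually prove this lemma at all: its entire proof reads ``The proof can be found in \cite{Folland}, p.~241.'' So there is nothing to compare beyond observing that you have supplied precisely the argument Folland gives, whereas the paper is content to cite it.
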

\begin{proof}
The proof can be found in \cite{Folland}, p.~241.
\end{proof}
Statement \ref{lem:part Lr bound, properties convolution} of Lemma \ref{lem:properties convolution} is called Young's inequality. It also holds for the convolution in time with upper bound $t$, which will appear in \eqref{eqn:solution convolution Green's function}. This is shown in the following corollary:
\begin{corollary}\label{cor:Lr bound conv time}
Let $T$ be fixed and let $p,q,r\in[1,\infty]$ satisfy $1/p+1/q=1+1/r$. If $f\in L^p([0,T])$ and $g\in L^q([0,T])$, then
\begin{enumerate}[resume]
  \item $f*_tg:=t\mapsto\Int{0}{t}{f(t-s)g(s)}{ds}\in L^r([0,T])$;
  \item $\|f*_tg\|_{L^r([0,T])}\leq \|f\|_{L^p([0,T])}\,\|g\|_{L^q([0,T])}$.\label{lem:part Lr bound, convolution time}
\end{enumerate}
\end{corollary}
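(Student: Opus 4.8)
The plan is to deduce this from Young's inequality on $\R$, i.e.\ statement~\ref{lem:part Lr bound, properties convolution} of Lemma~\ref{lem:properties convolution} with $n=1$, by the standard trick of extending $f$ and $g$ by zero. First I would set $\tilde f, \tilde g : \R\to\R$ equal to $f$ and $g$ on $[0,T]$ and equal to $0$ on $\R\setminus[0,T]$. Since $[0,T]$ has finite Lebesgue measure, $\tilde f$ and $\tilde g$ are measurable and satisfy $\tilde f\in L^p(\R)$, $\tilde g\in L^q(\R)$ with $\|\tilde f\|_{L^p(\R)}=\|f\|_{L^p([0,T])}$ and $\|\tilde g\|_{L^q(\R)}=\|g\|_{L^q([0,T])}$.

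Next I would check that the truncated convolution is nothing but the restriction to $[0,T]$ of the full convolution: for every $t\in[0,T]$,
\[
(\tilde f*\tilde g)(t)=\Int{-\infty}{\infty}{\tilde f(t-s)\tilde g(s)}{ds}=\Int{0}{t}{f(t-s)g(s)}{ds}=(f*_tg)(t),
\]
because $\tilde g(s)=0$ for $s<0$ and $\tilde f(t-s)=0$ for $s>t$ (since then $t-s<0$). So $f*_tg$ agrees on $[0,T]$ with $(\tilde f*\tilde g)\!\restriction_{[0,T]}$.

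Both assertions of the corollary then follow at once. Statement~\ref{lem:part in Lr, properties convolution} of Lemma~\ref{lem:properties convolution} gives $\tilde f*\tilde g\in L^r(\R)$, hence its restriction $f*_tg$ lies in $L^r([0,T])$; and statement~\ref{lem:part Lr bound, properties convolution} of the same lemma yields
\[
\|f*_tg\|_{L^r([0,T])}\leq\|\tilde f*\tilde g\|_{L^r(\R)}\leq\|\tilde f\|_{L^p(\R)}\,\|\tilde g\|_{L^q(\R)}=\|f\|_{L^p([0,T])}\,\|g\|_{L^q([0,T])}.
\]
I do not expect a real obstacle here: the only points needing (routine) care are the measurability of the zero-extensions and the almost-everywhere identification of $f*_tg$ with the restriction of $\tilde f*\tilde g$, after which Lemma~\ref{lem:properties convolution} does all the work.
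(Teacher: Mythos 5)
Your proposal is correct and is exactly the argument the paper gives (the paper's proof is a one-line statement that one extends $f$ and $g$ by zero to $\R$ and applies Parts \ref{lem:part in Lr, properties convolution} and \ref{lem:part Lr bound, properties convolution} of Lemma \ref{lem:properties convolution} with $n=1$). You have merely filled in the routine details -- the identification of $f*_tg$ with the restriction of $\tilde f*\tilde g$ to $[0,T]$ and the equality of the norms -- which the paper leaves implicit.
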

\begin{proof}
The statement of this corollary follows from extension to $\R$ of $f$ and $g$ by zero outside $[0,T]$ and applying Lemma \ref{lem:properties convolution}, Parts \ref{lem:part in Lr, properties convolution} and \ref{lem:part Lr bound, properties convolution} (for $n=1$).
\end{proof}
The Green's function of the diffusion operator on $\R^n$ is (for general dimension $n$) given by
\begin{equation}\label{eqn:Green's function general dimension}
G_t(x) := (4\pi d t)^{-n/2} \mathrm{e}^{-|x|^2/4dt}.
\end{equation}
\begin{lemma}[Properties of the Green's function on $\R^2$]\label{lem:properties Green's function}
Consider the Green's function \eqref{eqn:Green's function general dimension} for dimension $n=2$.
\begin{enumerate}
  \item The gradient of the Green's function satisfies
%    \begin{align}
%    \|\nabla G_\cdot(x)\|_{L^\infty(0,t)} := \sup_{\tau\in(0,t)}\|\nabla G_\tau(x)\| &\leq c_1\,|x|^{-3}.\label{eqn:lemma bound nabla G time}
%    \end{align}
    \begin{align}\label{eqn:lemma bound nabla G time}
\|\nabla G_\cdot(x)\|_{L^\infty(0,\infty)} := \sup_{\tau\in(0,\infty)}\|\nabla G_\tau(x)\|=
                \left\{
                   \begin{array}{ll}
                     0, & \hbox{$x=0$;} \\
                     \dfrac{8\mathrm{e}^{-2}}{\pi}\,|x|^{-3}, & \hbox{$x\in\R^2\setminus\{0\}$.}
                   \end{array}
                 \right.
    \end{align}
  \item For all $1\leq p\leq\infty$ there is a constant $c$ such that for all $t\in\Rp$
    \begin{equation}
    \|G_t(\cdot)\|_{L^p(\R^2)}\leq c\,t^{\frac1p-1}.
    \end{equation}
    The constant depends on $p$ and $d$.\label{lem:part Lp bound gradient, properties Green's function}
\end{enumerate}
\end{lemma}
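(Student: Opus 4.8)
The plan is to prove each of the two estimates in Lemma \ref{lem:properties Green's function} by direct computation from the explicit formula \eqref{eqn:Green's function general dimension} with $n=2$, i.e. $G_t(x) = (4\pi d t)^{-1}\mathrm{e}^{-|x|^2/4dt}$.

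\textbf{Part 1.} First I would compute the spatial gradient. Differentiating, $\nabla G_t(x) = -\dfrac{x}{2dt}\,G_t(x) = -\dfrac{x}{8\pi d^2 t^2}\,\mathrm{e}^{-|x|^2/4dt}$, so that $\|\nabla G_t(x)\| = \dfrac{|x|}{8\pi d^2 t^2}\,\mathrm{e}^{-|x|^2/4dt}$. For $x=0$ this is identically $0$, giving the first case. For $x\neq 0$, I would introduce the substitution $s := |x|^2/(4dt)$, so that $t = |x|^2/(4ds)$ and $t^{-2} = 16 d^2 s^2 / |x|^4$; then $\|\nabla G_t(x)\| = \dfrac{|x|}{8\pi d^2}\cdot\dfrac{16 d^2 s^2}{|x|^4}\,\mathrm{e}^{-s} = \dfrac{2}{\pi |x|^3}\, s^2 \mathrm{e}^{-s}$. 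As $t$ ranges over $(0,\infty)$, $s$ ranges over $(0,\infty)$, so the supremum over $\tau$ is $\dfrac{2}{\pi|x|^3}\sup_{s>0} s^2\mathrm{e}^{-s}$. The function $s\mapsto s^2 \mathrm{e}^{-s}$ has derivative $(2s - s^2)\mathrm{e}^{-s}$, vanishing at $s=2$, with maximal value $4\mathrm{e}^{-2}$. Hence the supremum equals $\dfrac{2}{\pi|x|^3}\cdot 4\mathrm{e}^{-2} = \dfrac{8\mathrm{e}^{-2}}{\pi}\,|x|^{-3}$, which is exactly the claimed expression.

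\textbf{Part 2.} Next I would bound the $L^p(\R^2)$ norm of $G_t$. For $1\leq p<\infty$, I compute $\|G_t\|_{L^p(\R^2)}^p = (4\pi d t)^{-p}\int_{\R^2}\mathrm{e}^{-p|x|^2/4dt}\,dx$. Passing to polar coordinates, $\int_{\R^2}\mathrm{e}^{-p|x|^2/4dt}\,dx = 2\pi\int_0^\infty r\,\mathrm{e}^{-pr^2/4dt}\,dr = 2\pi\cdot\dfrac{2dt}{p} = \dfrac{4\pi d t}{p}$, so $\|G_t\|_{L^p(\R^2)}^p = (4\pi d t)^{-p}\cdot\dfrac{4\pi d t}{p} = \dfrac{1}{p}\,(4\pi d)^{1-p}\,t^{1-p}$, and therefore $\|G_t\|_{L^p(\R^2)} = p^{-1/p}(4\pi d)^{(1-p)/p}\,t^{(1-p)/p} = c\, t^{\frac1p - 1}$ with $c := p^{-1/p}(4\pi d)^{\frac1p - 1}$ depending only on $p$ and $d$. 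For $p=\infty$, $\|G_t\|_{L^\infty(\R^2)} = G_t(0) = (4\pi d t)^{-1} = (4\pi d)^{-1} t^{-1}$, which is the stated bound with the convention $\frac1p = 0$ and $c = (4\pi d)^{-1}$. This completes the proof.

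There is no genuine obstacle here: everything reduces to the Gaussian integral $\int_0^\infty r\mathrm{e}^{-\alpha r^2}\,dr = \tfrac1{2\alpha}$ and to maximizing $s^2\mathrm{e}^{-s}$ on $(0,\infty)$. The only point requiring a little care is the change of variables in Part 1 — checking that $s = |x|^2/(4dt)$ is a bijection of $(0,\infty)$ onto itself for each fixed $x\neq 0$, so that taking the supremum over $t$ is equivalent to taking it over $s$ — and handling the degenerate case $x=0$ separately, where $\nabla G_t(0)=0$ for every $t$.
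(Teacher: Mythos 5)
Your proposal is correct. For Part 1 you do essentially what the paper does: compute $\|\nabla G_\tau(x)\|=\frac{|x|}{8\pi d^2\tau^2}\mathrm{e}^{-|x|^2/4d\tau}$ and maximize over the time variable; the paper finds the critical point directly in $\tau$ (at $\tau=|x|^2/8d$), while you substitute $s=|x|^2/(4dt)$ and maximize $s^2\mathrm{e}^{-s}$ at $s=2$ --- these are the same maximizer and the same value, and your substitution arguably makes the one-variable optimization cleaner. The real difference is in Part 2: the paper does not prove it at all, but simply cites a statement in \cite{Hille_JEE} (p.~432), whereas you give a short self-contained computation via the Gaussian integral in polar coordinates, which in fact yields the exact identity $\|G_t\|_{L^p(\R^2)}=p^{-1/p}(4\pi d)^{\frac1p-1}t^{\frac1p-1}$ for $1\le p<\infty$ (and $(4\pi d t)^{-1}$ for $p=\infty$), strictly sharper than the stated inequality and with an explicit constant. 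Your version is therefore more informative and self-contained; the only minor point of care, which you already flag, is the bijectivity of $t\mapsto s$ on $(0,\infty)$ for fixed $x\neq0$ and the separate treatment of $x=0$.
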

\begin{proof}
\begin{enumerate}
  \item For all $x\in\R^2$ and all $\tau\in\Rp$
    \begin{equation}\label{eqn:Euclidean norm gradient G}
    \|\nabla G_\tau(x)\|=\dfrac{|x|}{8\pi d^2\tau^2}\,\mathrm{e}^{-|x|^2/4d\tau},
    \end{equation}
    where $\|\cdot\|$ denotes the Euclidean norm on $\R^2$. For $x=0$ we have that $\|\nabla G_\tau(0)\|=0$ for all $\tau\in(0,\infty)$, thus the corresponding part of \eqref{eqn:lemma bound nabla G time} follows.\\
    Next, we consider $x\neq0$. Note that for all such $x$
    \begin{align}
    \lim_{\tau\to 0}\|\nabla G_\tau(x)\|&=0,\\
    \lim_{\tau\to \infty}\|\nabla G_\tau(x)\|&=0.
    \end{align}
    Since the right-hand side in \eqref{eqn:Euclidean norm gradient G} is nonnegative and differentiable for all $\tau\in\Rp$, its maximum on $\Rp$ is attained where
    \begin{equation}
    \dfrac{\partial}{\partial\tau}\|\nabla G_\tau(x)\|=\dfrac{|x|}{4\pi d^2 \tau^3}\left(\dfrac{|x|^2}{8d\tau}-1\right)\,\mathrm{e}^{-|x|^2/4d\tau} =0,
    \end{equation}
    i.e.~at $\tau=|x|^2/8d$. Now the statement of the lemma follows:
    \begin{equation}
    %\|\nabla G_\cdot(x)\|_{L^\infty(0,t)}\leq\|\nabla G_\cdot(x)\|_{L^\infty(0,\infty)}=\left[\|\nabla G_\tau(x)\|\right]_{\tau=|x|^2/8d}= c_1\,|x|^{-3},
    \|\nabla G_\cdot(x)\|_{L^\infty(0,\infty)}=\|\nabla G_\tau(x)\|\Big|_{\tau=|x|^2/8d}= \dfrac{8\mathrm{e}^{-2}}{\pi}\,|x|^{-3}.
    \end{equation}
  \item The proof is a direct consequence of the statement in \cite{Hille_JEE} at the bottom of p.~432.
\end{enumerate}
\end{proof}

\section{Solution concepts and their regularity}\label{sec:soln reg}
For problem \eqref{eqn:system Full problem} we follow \cite{Denk0,Denk} by considering solutions in the sense of $L^2(\Omega)$-valued distributions on $[0,T]$. Our setting is a special case of the setting in \cite{Denk}. However, \cite{Denk} is one of the few works that we are aware of that consider maximal regularity issues for problems in {\em unbounded} domains. The seminal works by Solonnikov \cite{Solonnikov} and Lasiecka \cite{Lasieska} cover bounded domains $\Omega$ only.

We reformulate Theorem 2.1 in \cite{Denk} to obtain:
\begin{theorem}\label{thrm:denk et al}
If
\begin{itemize}
  \item $\phi\in H^1([0,T]; L^2(\Gamma))\cap L^2([0,T];H^1(\Gamma))$, and
  \item $u_0\in H^1(\Omega)$,
\end{itemize}
then Problem \eqref{eqn:system Full problem} has a unique solution
\begin{equation}
u \in H^1([0,T];L^2(\Omega))\cap L^2([0,T];H^2(\Omega)).
\end{equation}
\end{theorem}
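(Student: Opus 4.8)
The plan is to obtain Theorem~\ref{thrm:denk et al} as a direct specialisation of the abstract $L^p$-maximal-regularity theory for parabolic initial--boundary value problems developed in \cite{Denk0,Denk}, with $p=2$, to the scalar heat operator with an inhomogeneous Neumann condition. First I would rewrite \eqref{eqn:system Full problem} as $\partial_t u + \mathcal{A}u = 0$ on $\Omega\times(0,T)$, $\mathcal{B}u = \phi$ on $\Gamma\times(0,T)$, $u(0) = u_0$, with $\mathcal{A} := -d\Delta$ of order $2$ and $\mathcal{B}u := d\,\nabla u\cdot n$ of order $1$, and verify the structural hypotheses of \cite[Thm.~2.1]{Denk}: (i) $\Omega = \R^2\setminus\overline{\OO}$ is a domain with compact, uniformly $C^2$ boundary, hence lies in the admissible class of (possibly unbounded) domains treated there --- which is precisely why \cite{Denk} is invoked instead of \cite{Solonnikov,Lasieska}; (ii) $\mathcal{A}$ is parameter-elliptic, immediate since its principal symbol $d|\xi|^2$ has spectrum in $(0,\infty)$; (iii) the pair $(\mathcal{A},\mathcal{B})$ satisfies the Lopatinskii--Shapiro condition, which for the Laplacian with a conormal boundary operator reduces to the classical fact that, for $\lambda$ in a suitable sector containing $(0,\infty)$, the half-line problem $\lambda w - d\,w'' = 0$ on $(0,\infty)$ with $w(\infty) = 0$ and prescribed $w'(0)$ is uniquely solvable with $w$ depending boundedly on the datum; and (iv) all lower-order coefficients vanish, so the smoothness and decay conditions imposed on them in \cite{Denk} are vacuous.

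Next I would check that the data lie in the spaces demanded by the theorem. The interior inhomogeneity is $0 \in L^2([0,T];L^2(\Omega))$. For the solution class $H^1([0,T];L^2(\Omega))\cap L^2([0,T];H^2(\Omega))$, the admissible initial data form the real interpolation space $(L^2(\Omega),H^2(\Omega))_{1/2,2} = H^1(\Omega)$ --- no boundary condition being imposed on $u_0$ at this interpolation index, since the Neumann condition, of order $1$, survives interpolation only beyond index $3/4$ --- which is exactly the hypothesis $u_0\in H^1(\Omega)$. The normal trace $\mathcal{B}u$ of a function $u$ in that class lies in the anisotropic space $H^{1/4}([0,T];L^2(\Gamma))\cap L^2([0,T];H^{1/2}(\Gamma))$; since $H^1([0,T];L^2(\Gamma))\hookrightarrow H^{1/4}([0,T];L^2(\Gamma))$ and $L^2([0,T];H^1(\Gamma))\hookrightarrow L^2([0,T];H^{1/2}(\Gamma))$, the assumed regularity of $\phi$ is more than sufficient. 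Moreover, because $H^{1/4}([0,T];L^2(\Gamma))$ does not embed into $C([0,T];L^2(\Gamma))$, this trace space carries no trace at $t=0$, so no compatibility condition between $\phi(0)$ and the (undefined) value $d\,\nabla u_0\cdot n$ is required at this regularity level and the extra smoothness of $\phi$ poses no obstruction. Invoking \cite[Thm.~2.1]{Denk} then yields a solution in the asserted class together with the accompanying a priori estimate.

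Uniqueness is already part of the conclusion of \cite[Thm.~2.1]{Denk} --- the solution operator being an isomorphism onto the data space --- but I would also record the self-contained energy argument: if $u$ lies in the solution class and solves \eqref{eqn:system Full problem} with $u_0 = 0$ and $\phi = 0$, then testing the equation with $u(t)$ and integrating over $\Omega$ --- legitimate since $u(t)\in H^2(\Omega)$, so $u(t)\nabla u(t)$ is integrable with integrable divergence and the boundary term reduces to $\int_\Gamma d\,(\nabla u\cdot n)\,u\,d\sigma = 0$ --- gives
\[
\tfrac{1}{2}\,\frac{d}{dt}\,\|u(t)\|_{L^2(\Omega)}^2 \;=\; -\,d\,\|\nabla u(t)\|_{L^2(\Omega)}^2 \;\le\; 0
\]
for a.e.\ $t\in(0,T)$, whence $\|u(t)\|_{L^2(\Omega)}\le\|u(0)\|_{L^2(\Omega)} = 0$ and $u\equiv 0$.

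The step I expect to be the genuine difficulty is not conceptual but a matter of careful translation: \cite{Denk} is formulated in a general (Besov-/Triebel--Lizorkin- or vector-valued-$L^p$-flavoured) setting, and the work consists in (a) writing down and verifying the Lopatinskii--Shapiro condition in our concrete scalar Neumann situation, (b) pinning down exactly which trace space \cite{Denk} prescribes for the Neumann datum when $p=2$ and confirming the embeddings used above, and (c) checking that the \emph{unbounded}, exterior-type domain $\Omega$ with its compact $C^2$ boundary genuinely falls within the class of domains for which \cite{Denk} establishes maximal regularity. Once (a)--(c) are settled, the theorem applies essentially verbatim.
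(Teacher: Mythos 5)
Your proposal follows essentially the same route as the paper: both obtain the result as a direct specialisation of Theorem~2.1 of \cite{Denk} with $p=2$, $m=1$, verifying parameter-ellipticity, the Lopatinskii--Shapiro condition, the identification $B^1_{2,2}(\Omega)=H^1(\Omega)$ for the initial datum, the sufficiency of the assumed regularity of $\phi$ for the required trace space, and the absence of a compatibility condition at $t=0$. Your write-up simply spells out these verifications in more detail and appends a (redundant but correct) energy-based uniqueness argument.
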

\begin{proof}
The statement of this theorem is fully covered by Theorem 2.1 in \cite{Denk}. We now point out why we satisfy their conditions. Note that we use $p=2$ and $m=1$ in their setting. First, $\R$ is a so-called $\mathcal{HT}$-space, meaning that the Hilbert transform defines a bounded operator on $L^p(\R)$ for $1<p<\infty$ (cf. \cite{Riesz}, VII). The conditions (E), (LS), (SD) and (SB) from \cite{Denk} are easily verified for $\mathcal{A}u:=-d\Delta u$ and $\mathcal{B}u:=\nabla u\cdot n$. Regarding Condition (D) in \cite{Denk}, we note that in our case $f\equiv0$ and moreover, no compatibility condition (iv) is needed. In (iii), we use that $B^1_{2,2}(\Omega)=H^1(\Omega)$; see \cite{AdamsFournier} p.~231. A sufficient condition for (ii) to hold, is the one on $\phi$ given in the hypotheses of this theorem. We avoid -- in our setting unnecessary -- the use of fractional Sobolev spaces.
\end{proof}

Problem \eqref{eqn:system point source uhat} has a measure-valued right-hand side. \cite{Boccardo_ea} provide regularity results for weak solutions of non-linear parabolic problems with such measure-valued right-hand side. These apply to {\em bounded} domains with Dirichlet boundary condition and zero initial value.

We consider {\em mild solutions} to \eqref{eqn:system point source uhat} in the Banach space of finite Borel measures on $\R^2$, completed for the dual bounded Lipschitz norm $\|\cdot\|_{\mathrm{BL}}^*$ or Fortet-Mourier norm: $\overline{\mathcal{M}}(\R^2)_{\mathrm{BL}}$ (cf. \cite{Hille-Worm} and references found there). First, the diffusion semigroup $(S_t)_{t\geq0}$ on $\overline{\mathcal{M}}(\R^2)_{\mathrm{BL}}$ is defined for measures $\mu\in\mathcal{M}(\R^2)$ by convolution with the Green's function $G_t$ defined by \eqref{eqn:Green's function general dimension}, i.e.
\begin{equation}\label{eq:def diff on measures}
\pair{S_t\mu}{\varphi} := \pair{G_t*\mu}{\varphi}=\int_{\R^2}\int_{\R^2} G_t(x-y)\varphi(x)\,d\mu(y)\,dx
\end{equation}
for $\varphi\in C_b(\R^2)$. Thus, for positive $\mu$, $S_t\mu$ defines a positive linear functional on $C_c(\R^2)$, which is represented by a unique Radon measure according to the Riesz Representation Theorem. It is a finite measure because
\[
(S_t\mu)(\R^2)= \pair{S_t\mu}{\mathbf{1}}=\mu(\R^2)<\infty.
\]
Using the Jordan decomposition, we see that $S_t\mu\in\mathcal{M}(\R^2)$ for any $\mu\in\mathcal{M}(\R^2)$. One can check using \eqref{eq:def diff on measures} that $S_t$ is a bounded operator on $\mathcal{M}(\R^2)$ for $\|\cdot\|_{\mathrm{BL}}^*$. By continuity it extends to the completion $\overline{\mathcal{M}}(\R^2)_{\mathrm{BL}}$. Moreover, there exists $C>0$ such that
\[
\|S_t\nu\|_{\mathrm{BL}}^*\leq C\|\nu\|_{\mathrm{BL}}^*
\]
for all $t\geq 0$ and $\nu\in\overline{\mathcal{M}}(\R^2)_{\mathrm{BL}}$. Strong continuity of $(S_t)_{t\geq0}$ on $\overline{\mathcal{M}}(\R^2)_{\mathrm{BL}}$ can then be obtained from strong continuity on the dense subspace $\mathcal{M}(\R^2)$ that follows from \eqref{eq:def diff on measures} and \cite{Engel-Nagel}, Proposition I.5.3.

The mild solution to \eqref{eqn:system point source uhat} is now defined by
\begin{equation}\label{eqn:varOfConst}
\hat{\mu}(t) := S(t)\mu_0 + \Int{0}{t}{S(t-s)[\bar{\phi}(s)\delta_0]}{ds},
\end{equation}
for given initial measure $\mu_0\in\mathcal{M}(\R^2)$ (\cite{Pazy}, Ch.4, Def. 2.3, p.106). One can show that $\hat{\mu}\in C(\R_+,\overline{\mathcal{M}}(\R^2)_{\mathrm{BL}})$ whenever $\bar{\phi}\in L^1_{\mathrm{loc}}(\R_+)$.

If $\mu_0$ has density $\hat{u}_0$ with respect to Lebesgue measure $dx$ on $\R^2$, then according to \eqref{eq:def diff on measures} solution $\hat{\mu}(t)$ can be identified  with $\hat{u}(x,t)dx$ where the density function $\hat{u}$ is given by
\begin{align}
\nonumber \hat{u}(x,t) &= \Int{\R^2}{}{G_t(x-y)\hat{u}_0(y)}{dy} + \Int{0}{t}{G_{t-s}(x)\bar{\phi}(s)}{ds}\\
&=: (G_t*_x \hat{u}_0) (x) + (G_\cdot(x)*_t \bar{\phi}) (t). \label{eqn:solution convolution Green's function}
\end{align}
for all $(x,t)\in\R^2\times\Rp$. Here the notation $*_x$ and $*_t$ emphasizes that one takes convolution with respect to the space or time variable. Both have a regularising effect on the solution, that yields the following result for the restriction of $\hat{u}(t)$ to $\Omega$, the domain on which we compare with solution $u(t)$ to Problem \eqref{eqn:system Full problem}:

\begin{theorem}\label{thm:reg}
If $\hat{u}_0\in H^1(\R^2)$ and $\bar{\phi}\in H^1([0,T])$, then $\hat{u}$ (restricted to $\Omega$) satisfies
\begin{equation}
\hat{u} \in H^1([0,T];L^2(\Omega))\cap L^2([0,T];H^2(\Omega)).
\end{equation}
Moreover, $\partial_t\hat{u}(t)=d\Delta\hat{u}(t)$ in $L^2(\Omega)$ for almost every $t$ in $[0,T]$.
\end{theorem}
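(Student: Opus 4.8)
The plan is to split the representation \eqref{eqn:solution convolution Green's function} as $\hat u=w+z$ with
\[
w(x,t):=(G_t*_x\hat u_0)(x),\qquad z(x,t):=\Int{0}{t}{G_{t-s}(x)\bar\phi(s)}{ds},
\]
and to establish the asserted space-time regularity, and the pointwise-in-$t$ identity with $d\Delta$, separately for the two summands; the theorem then follows by addition. The geometric fact that makes the point-source term harmless on $\Omega$ is that $0\in\OO$ with $\OO$ open and $\Gamma$ a $C^2$ curve force $\delta:=\mathrm{dist}(0,\Gamma)>0$, and (since $\OO$ is connected with $0$ in its interior) $B_\delta(0)\subset\OO$, so $|x|\geq\delta$ for every $x\in\Omega$; the singularity of $z$ at the origin thus lies strictly inside $\OO$.

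For $w$ I would invoke the classical maximal $L^2$-regularity of the heat equation on $\R^2$ (a simpler instance of Theorem \ref{thrm:denk et al} with $\Omega$ replaced by $\R^2$ and $\phi\equiv0$), but it is equally quick via the Fourier transform: $\widehat w(\xi,t)=\mathrm{e}^{-d|\xi|^2t}\widehat{\hat u_0}(\xi)$, whence by Plancherel $\|w(t)\|_{L^2(\R^2)}\leq\|\hat u_0\|_{L^2(\R^2)}$, $\|\nabla w(t)\|_{L^2(\R^2)}\leq\|\nabla\hat u_0\|_{L^2(\R^2)}$, and
\[
\Int{0}{T}{\big\| |\xi|^2\widehat w(\cdot,t)\big\|^2_{L^2(\R^2)}}{dt}=\Int{\R^2}{}{|\xi|^4|\widehat{\hat u_0}(\xi)|^2\,\frac{1-\mathrm{e}^{-2d|\xi|^2T}}{2d|\xi|^2}}{d\xi}\leq\frac{1}{2d}\|\nabla\hat u_0\|^2_{L^2(\R^2)}<\infty,
\]
so $w\in L^2([0,T];H^2(\R^2))$; since $\widehat{\partial_t w}=-d|\xi|^2\widehat w=\widehat{d\Delta w}$ lies in $L^2([0,T];L^2(\R^2))$, also $w\in H^1([0,T];L^2(\R^2))$ and $\partial_t w=d\Delta w$ in $L^2(\R^2)$ for a.e.\ $t$. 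Restriction to $\Omega$ preserves everything.

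For $z$ I would argue by direct pointwise estimates, using that $|x|\geq\delta$ on $\Omega$. For each multi-index $\alpha$ with $|\alpha|\leq 2$ one may differentiate under the integral sign, $\partial_x^\alpha z(x,t)=\Int{0}{t}{(\partial_x^\alpha G_{t-s})(x)\,\bar\phi(s)}{ds}$, so that $|\partial_x^\alpha z(x,t)|\leq M_\alpha(x)\,\|\bar\phi\|_{L^1([0,T])}$ with $M_\alpha(x):=\sup_{\tau>0}|(\partial_x^\alpha G_\tau)(x)|$ and $\|\bar\phi\|_{L^1([0,T])}<\infty$ because $H^1([0,T])\hookrightarrow C([0,T])$. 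A computation entirely parallel to the proof of Lemma \ref{lem:properties Green's function}(1) — maximising in $\tau$ after the scaling $\tau=|x|^2\sigma$ — yields $M_\alpha(x)\leq c_\alpha(d)\,|x|^{-(2+|\alpha|)}$ for $x\neq0$ (and Lemma \ref{lem:properties Green's function}(1) itself covers $|\alpha|=1$); since $2(2+|\alpha|)>2$ these are square-integrable over $\{|x|\geq\delta\}\supseteq\Omega$, so $\|z(\cdot,t)\|_{H^2(\Omega)}\leq c(\delta,d)\,\|\bar\phi\|_{L^1([0,T])}$ uniformly in $t\in[0,T]$, i.e.\ $z\in L^\infty([0,T];H^2(\Omega))\subseteq L^2([0,T];H^2(\Omega))$. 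Differentiating the Duhamel integral in $t$ produces a boundary term $\big(\lim_{\tau\downarrow0}G_\tau(x)\big)\bar\phi(t)=0$ (again since $|x|\geq\delta>0$) plus $\Int{0}{t}{(\partial_\tau G_\tau)(x)\big|_{\tau=t-s}\bar\phi(s)}{ds}=d\Delta_x z(x,t)$, using $\partial_\tau G_\tau=d\Delta G_\tau$; hence $\partial_t z=d\Delta z$ pointwise on $\Omega\times(0,T]$, and since $\sup_{\tau>0}|\partial_\tau G_\tau(x)|\leq c(d)|x|^{-4}$ is again square-integrable over $\Omega$, this upgrades (via $z(\cdot,t)=\Int{0}{t}{\partial_t z(\cdot,s)}{ds}$ and Fubini) to $\partial_t z\in L^2([0,T];L^2(\Omega))$, $z\in H^1([0,T];L^2(\Omega))$, and $\partial_t z=d\Delta z$ in $L^2(\Omega)$ for a.e.\ $t$. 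Adding $w$ and $z$ gives the theorem.

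The step I expect to be the main obstacle is the control of $z$ on the \emph{unbounded} domain $\Omega$: the triviality of the origin singularity is not by itself sufficient, and one must verify that the decay rates $\sup_\tau|\partial_x^\alpha G_\tau(x)|=O(|x|^{-(2+|\alpha|)})$ and $\sup_\tau|\partial_\tau G_\tau(x)|=O(|x|^{-4})$ are fast enough for square-integrability in dimension two — which they are, comfortably — and, as a secondary nuisance, that the uniform-in-$s$ domination on $\{|x|\geq\delta\}$ genuinely licenses differentiating the Duhamel term under the integral and identifying the pointwise $t$-derivative with the weak $L^2(\Omega)$-valued one.
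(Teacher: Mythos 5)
Your proof is correct, and it uses the same decomposition as the paper's (Appendix) proof: $\hat u=\hat u_1+\hat u_2$ with $\hat u_1=G*_x\hat u_0$ and $\hat u_2=G_\cdot(x)*_t\bar\phi$, together with the same key geometric observation that $\Omega$ is bounded away from the origin so the singularity of the Duhamel term is invisible on $\Omega$. The technical execution of each half differs, though, in ways worth recording. For the initial-value part the paper simply cites the maximal $L^2$-regularity theorem of Denk--Hieber--Pr\"uss with domain $\R^2$, whereas you give a self-contained Plancherel computation; this is a legitimate and arguably preferable shortcut, since on the whole space the Fourier transform delivers $\|w\|_{L^2([0,T];H^2)}\lesssim\|\hat u_0\|_{H^1}$ and $\partial_tw=d\Delta w$ with no machinery. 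For the Duhamel part the paper proceeds via Minkowski's integral inequality plus Young's convolution inequality in time (Corollary \ref{cor:Lr bound conv time}), using $\|G_{t-s}\|_{L^2(\R^2)}\leq c(t-s)^{-1/2}$ for the $L^2$-norm of $\hat u_2$ itself, the uniform-in-$t$ bounds $\|\partial_x^\alpha G_t\|_{L^2(\Omega)}\leq C$ for the spatial derivatives, and the identity $\partial_t(G_\cdot(x)*_t\bar\phi)=G_\cdot(x)*_t\partial_t\bar\phi$ --- which is where the hypothesis $\bar\phi\in H^1([0,T])$ enters. You instead use pointwise sup-in-$\tau$ bounds $\sup_\tau|\partial_x^\alpha G_\tau(x)|\lesssim|x|^{-(2+|\alpha|)}$ throughout and differentiate the Duhamel integral directly in $t$, exploiting that $G_\tau(x)\to0$ as $\tau\downarrow0$ for $x\neq0$ so that no boundary term survives and $\partial_tz=d\Delta z$ follows from the already-established spatial bounds. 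This buys you a slightly stronger conclusion ($z\in L^\infty([0,T];H^2(\Omega))$ rather than $L^2$) under a weaker hypothesis (only $\bar\phi\in L^1([0,T])$ is used for the Duhamel piece, so the $H^1$ regularity of $\bar\phi$ is needed only where the paper's argument uses it, not where yours does); the paper's convolution-inequality route, on the other hand, is the one that generalises when the uniform-in-$t$ bounds fail (e.g.\ for the $L^2(\R^2)$-norm of $G_t$ itself, which blows up as $t^{-1/2}$). All the steps you flag as potential obstacles --- square-integrability of $|x|^{-(2+|\alpha|)}$ and $|x|^{-4}$ over $\{|x|\geq\delta\}$ in two dimensions, and differentiation under the integral sign with an $s$-uniform dominating function --- do go through as you anticipate.
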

\begin{proof}
See Appendix.
\end{proof}

\section{Flux estimates}\label{sec:flux estimates}
In this section we present in Theorem \ref{thm:claim boundary} a bound on the difference between the fluxes on $\Gamma$ in \eqref{eqn:system Full problem} and \eqref{eqn:system point source uhat}. According to Theorem \ref{thrm:denk et al} and Theorem \ref{thm:reg}, under the conditions for which these results hold, $c^*(t)$ defined by \eqref{eq:def c-star} is finite for every $t\in[0,T]$. The difference between the solutions $u$ and $\hat{u}$ on $\Omega$ will be expressed in terms of $c^*(t)$, among others, in Section \ref{sec:exterior estimates}.

Throughout this section, we shall assume the conditions of Theorems \ref{thrm:denk et al} and \ref{thm:reg} on the data. Note that $\bar{\phi}\in H^1([0,T])$ implies that
\begin{equation}
\Int{0}{t}{\|\bar{\phi}\|^2_{L^1(0,\tau)}}{d\tau} \leq \mbox{$\frac{1}{2}$} t^2 \|\bar{\phi}\|^2_{L^2([0,T])}<\infty
\end{equation}
for all $0\leq t\leq T$.

Before getting at the main estimate for $c^*(t)$, we derive auxiliary results in Lemma \ref{lem:boundary flux uhat0=0} and Lemma \ref{lem:boundary flux}.
\begin{lemma}\label{lem:boundary flux uhat0=0}
Assume that $\hat{u}_0\equiv0$. Then, for all $t>0$ we have
\begin{equation}
\Int{0}{t}{\|d\nabla\hat{u}\cdot n\|^2_{L^2(\Gamma)}}{}\leq d^2 C_\Gamma \Int{0}{t}{\|\bar{\phi}\|^2_{L^1(0,\tau)}}{d\tau} < \infty,
\end{equation}
where
\[
C_\Gamma:= \int_\Gamma\|\nabla G_\cdot(x)\|^2_{L^\infty(0,\infty)}\,d\sigma>0
\]
is independent of $t$.
\end{lemma}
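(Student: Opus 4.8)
The plan is to use the explicit representation \eqref{eqn:solution convolution Green's function}: since $\hat{u}_0\equiv0$, the density reduces to the time-convolution term alone, namely $\hat{u}(x,t)=\Int{0}{t}{G_{t-s}(x)\bar{\phi}(s)}{ds}$. First I would differentiate under the integral sign in the space variable to obtain $\nabla\hat{u}(x,t)=\Int{0}{t}{\nabla G_{t-s}(x)\,\bar{\phi}(s)}{ds}$ for $x\in\Gamma$ (note $0\notin\Gamma$, so the kernel $\nabla G_{\tau}(x)$ is bounded in $\tau$ by Lemma \ref{lem:properties Green's function}, Part 1, which legitimizes the differentiation). Then for fixed $x\in\Gamma$ I would estimate pointwise:
\[
\|\nabla\hat{u}(x,t)\| \leq \Int{0}{t}{\|\nabla G_{t-s}(x)\|\,|\bar{\phi}(s)|}{ds} \leq \|\nabla G_\cdot(x)\|_{L^\infty(0,\infty)}\,\|\bar{\phi}\|_{L^1(0,t)},
\]
pulling the supremum of the kernel out of the integral.

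Next I would square this, multiply by $d^2$, and integrate over $\Gamma$ with respect to arclength $d\sigma$:
\[
\|d\nabla\hat{u}(\cdot,t)\cdot n\|^2_{L^2(\Gamma)} \leq d^2 \Int{\Gamma}{}{\|\nabla\hat{u}(x,t)\|^2}{d\sigma} \leq d^2 \left(\Int{\Gamma}{}{\|\nabla G_\cdot(x)\|^2_{L^\infty(0,\infty)}}{d\sigma}\right)\|\bar{\phi}\|^2_{L^1(0,t)} = d^2 C_\Gamma\,\|\bar{\phi}\|^2_{L^1(0,t)},
\]
using $|\nabla\hat{u}\cdot n|\leq\|\nabla\hat{u}\|$ since $n$ is a unit vector. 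The constant $C_\Gamma$ is finite because, by the formula in \eqref{eqn:lemma bound nabla G time}, the integrand equals $(8\mathrm{e}^{-2}/\pi)^2|x|^{-6}$, which is continuous and bounded on the compact curve $\Gamma$ (recall $0\notin\Gamma$, $\Gamma$ is $C^2$ of finite length), so the integral is a finite positive number independent of $t$. Finally I would integrate the displayed inequality in $t$ from $0$ to the given upper limit, producing $\Int{0}{t}{\|d\nabla\hat{u}\cdot n\|^2_{L^2(\Gamma)}}{}\leq d^2 C_\Gamma\Int{0}{t}{\|\bar{\phi}\|^2_{L^1(0,\tau)}}{d\tau}$, and invoke the bound $\Int{0}{t}{\|\bar{\phi}\|^2_{L^1(0,\tau)}}{d\tau}\leq\frac12 t^2\|\bar{\phi}\|^2_{L^2([0,T])}<\infty$ recorded just before the lemma to conclude finiteness.

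The only genuinely delicate point is justifying that the boundary trace of $\nabla\hat{u}$ agrees with the pointwise formula $\Int{0}{t}{\nabla G_{t-s}(x)\bar{\phi}(s)}{ds}$ — i.e.\ that differentiation under the integral and restriction to $\Gamma$ are compatible with the $H^2(\Omega)$-regularity asserted in Theorem \ref{thm:reg}. Since $\Gamma$ stays at positive distance from the origin, $\nabla G_\tau(x)$ and its $x$-derivatives are uniformly bounded in $\tau$ on a neighbourhood of $\Gamma$ (again by \eqref{eqn:lemma bound nabla G time} and a similar estimate for second derivatives), and $\bar\phi\in H^1([0,T])\subset L^1([0,T])$; dominated convergence then justifies differentiating under the integral, and the resulting function is continuous up to $\Gamma$, so its classical restriction coincides with the Sobolev trace. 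Everything else is a routine chain of Cauchy–Schwarz/Minkowski estimates.
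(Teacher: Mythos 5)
Your argument is correct and is essentially identical to the paper's proof: both pull the supremum $\|\nabla G_\cdot(x)\|_{L^\infty(0,\infty)}$ out of the time convolution to get the pointwise bound $d\,\|\nabla G_\cdot(x)\|_{L^\infty(0,\infty)}\|\bar{\phi}\|_{L^1(0,\tau)}$, then integrate over $\Gamma$ and in time, with finiteness of $C_\Gamma$ following from \eqref{eqn:lemma bound nabla G time} because $\Gamma$ has finite length and stays away from the origin. Your closing paragraph justifying differentiation under the integral and the trace identification is a point the paper passes over silently, but it does not change the substance of the argument.
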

\begin{proof}
For $\hat{u}_0\equiv0$, the solution \eqref{eqn:solution convolution Green's function} of \eqref{eqn:system point source uhat} is given by
\begin{align}
\hat{u}(x,t)=\Int{0}{t}{G_{t-s}(x)\bar{\phi}(s)}{ds}.
\end{align}
Note that for $x\in\Gamma$ we have
\begin{align}\label{eqn:bound on flux uhat}
\nonumber \left| d\nabla\hat{u}(x,\tau)\cdot n(x) \right| &= \left| d\Int{0}{\tau}{\nabla G_{\tau-s}(x)\bar{\phi}(s)}{ds} \cdot n(x) \right|\\
\nonumber &\leq \left\| d\Int{0}{\tau}{\nabla G_{\tau-s}(x)\bar{\phi}(s)}{ds} \right\|\\
\nonumber &\leq d\,\|\nabla G_\cdot(x)\|_{L^\infty(0,\infty)} \, \Int{0}{\tau}{\left| \bar{\phi}(s) \right|}{ds}\\
&= d\,\|\nabla G_\cdot(x)\|_{L^\infty(0,\infty)} \, \| \bar{\phi} \|_{L^1(0,\tau)}.
\end{align}
We emphasize here that the infinity norm $\|\nabla G_\cdot(x)\|_{L^\infty(0,\infty)}$ denotes the supremum in the time domain for fixed $x$, cf.~\eqref{eqn:lemma bound nabla G time}. This observation leads to the following estimate
\begin{align}
\nonumber \Int{0}{t}{\|d\nabla\hat{u}(x,\tau)\cdot n(x)\|^2_{L^2(\Gamma)}}{d\tau} &= \Int{0}{t}{\Int{\Gamma}{}{|d\nabla\hat{u}(x,\tau)\cdot n(x)|^2}{d\sigma}}{d\tau}\\
&\leq d^2\,\Int{0}{t}{\Int{\Gamma}{}{\|\nabla G_\cdot(x)\|^2_{L^\infty(0,\infty)} \, \| \bar{\phi} \|^2_{L^1(0,\tau)}}{d\sigma}}{d\tau},
\end{align}
where \eqref{eqn:bound on flux uhat} is used in the second step. Thus, we have
\begin{align}\label{eqn:final bound on flux uhat=0}
\Int{0}{t}{\|d\nabla\hat{u}(x,\tau)\cdot n(x)\|^2_{L^2(\Gamma)}}{d\tau} \leq d^2\,\Int{0}{t}{\| \bar{\phi} \|^2_{L^1(0,\tau)}{d\tau}\,\Int{\Gamma}{}{\|\nabla G_\cdot(x)\|^2_{L^\infty(0,\infty)}}}{d\sigma}.
\end{align}
Since $\Gamma$ has finite length and it is the boundary of a set of which $0$ is an interior point, it follows from \eqref{eqn:lemma bound nabla G time} in Lemma \ref{lem:properties Green's function} that the second integral on the right-hand side of \eqref{eqn:final bound on flux uhat=0} is finite. This finishes the proof.
%%Since $(x,\tau)\mapsto|\nabla G_\tau(x)|$ -- extended by 0 for $\tau=0$ -- is continuous on the compact set $\Gamma\times[0,t]$, we also have that $\Int{\Gamma}{}{\|\nabla G_\cdot(x)\|^2_{L^\infty(0,t)}}{d\sigma}$ is finite. This finishes the proof.
\end{proof}
In the next lemma we generalize this result to nonzero initial conditions.
\begin{lemma}\label{lem:boundary flux}
If $\hat{u}_0$ is such that $\nabla\hat{u}_0 \in L^p(\R^2)$ for some $2<p\leq\infty$, then
\begin{equation}
\Int{0}{t}{\|d\nabla\hat{u}\cdot n\|^2_{L^2(\Gamma)}}{} \leq d^2|\Gamma|C t^{\frac{2}{q}-1} \|\nabla \hat{u}_0\|^2_{L^p(\R^2)} + 2d^2 C_\Gamma \Int{0}{t}{\|\bar{\phi}\|^2_{L^1(0,\tau)}}{d\tau} <\infty,
\end{equation}
for all $t>0$, where $q:=p/(p-1)$, $C$ depends on $d$ and $q$ and $C_\Gamma$ is the constant from Lemma \ref{lem:boundary flux uhat0=0}.
\end{lemma}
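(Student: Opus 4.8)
The plan is to split the solution $\hat{u}$ given by \eqref{eqn:solution convolution Green's function} into its two natural pieces: the homogeneous part $\hat{u}_{\mathrm{hom}}(x,t) := (G_t *_x \hat{u}_0)(x)$ coming from the initial condition, and the source part $\hat{u}_{\mathrm{src}}(x,t) := (G_\cdot(x) *_t \bar{\phi})(t)$, so that $\hat{u} = \hat{u}_{\mathrm{hom}} + \hat{u}_{\mathrm{src}}$. Then I would use the elementary inequality $|a+b|^2 \leq 2|a|^2 + 2|b|^2$ on $d\nabla\hat{u}\cdot n = d\nabla\hat{u}_{\mathrm{hom}}\cdot n + d\nabla\hat{u}_{\mathrm{src}}\cdot n$ and integrate over $\Gamma\times[0,t]$. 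The source term contributes $2d^2 C_\Gamma \Int{0}{t}{\|\bar{\phi}\|^2_{L^1(0,\tau)}}{d\tau}$ — this is exactly the bound from Lemma \ref{lem:boundary flux uhat0=0} applied to $\hat{u}_{\mathrm{src}}$, which solves \eqref{eqn:system point source uhat} with zero initial datum — so the work reduces to estimating the homogeneous contribution $d^2 \Int{0}{t}{\|\nabla\hat{u}_{\mathrm{hom}}(\cdot,\tau)\cdot n\|^2_{L^2(\Gamma)}}{d\tau}$ and showing it is at most $d^2|\Gamma| C\, t^{\frac{2}{q}-1}\|\nabla\hat{u}_0\|^2_{L^p(\R^2)}$.

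For the homogeneous part, the key observation is that differentiation commutes with convolution, so $\nabla\hat{u}_{\mathrm{hom}}(\cdot,\tau) = G_\tau *_x \nabla\hat{u}_0$ (componentwise). Since $\nabla\hat{u}_0 \in L^p(\R^2)$ and, by Lemma \ref{lem:properties Green's function} Part \ref{lem:part Lp bound gradient, properties Green's function}, $G_\tau \in L^q(\R^2)$ with $\|G_\tau\|_{L^q(\R^2)} \leq c\,\tau^{\frac{1}{q}-1}$ where $1/p + 1/q = 1$, Part \ref{lem:part inifinity bound, properties convolution} of Lemma \ref{lem:properties convolution} (Young's inequality with $r=\infty$) gives the pointwise bound
\begin{equation}
\|\nabla\hat{u}_{\mathrm{hom}}(x,\tau)\| \leq \|G_\tau\|_{L^q(\R^2)}\,\|\nabla\hat{u}_0\|_{L^p(\R^2)} \leq c\,\tau^{\frac{1}{q}-1}\|\nabla\hat{u}_0\|_{L^p(\R^2)}
\end{equation}
uniformly in $x$, in particular for $x\in\Gamma$. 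Using $|\nabla\hat{u}_{\mathrm{hom}}\cdot n| \leq \|\nabla\hat{u}_{\mathrm{hom}}\|$, integrating the square over $\Gamma$ produces a factor $|\Gamma|$, and then integrating over $\tau\in[0,t]$ gives $\Int{0}{t}{\tau^{\frac{2}{q}-2}}{d\tau}$. Since $2 < p \leq \infty$ forces $1 \leq q < 2$, hence $\frac{2}{q} - 2 \in (-1, 0]$, this integral converges and equals $\frac{q}{2-q}\, t^{\frac{2}{q}-1}$ (with the obvious modification $q=1$ excluded by $p<\infty$; note the hypothesis $p<\infty$ in the theorem is what makes $q>1$). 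Absorbing $c^2$ and $\frac{q}{2-q}$ into a single constant $C$ depending on $d$ and $q$ yields the homogeneous bound, and combining with the source estimate completes the proof; the finiteness of the right-hand side for every $t>0$ is immediate since $t^{\frac{2}{q}-1}$ is finite and $\bar{\phi}\in H^1([0,T])\subset L^2([0,T])$ makes $\Int{0}{t}{\|\bar{\phi}\|^2_{L^1(0,\tau)}}{d\tau}$ finite as already noted before Lemma \ref{lem:boundary flux uhat0=0}.

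The main obstacle I anticipate is the integrability of $\tau^{\frac{2}{q}-2}$ near $\tau=0$: this is precisely why the hypothesis demands $p>2$ (equivalently $q<2$) rather than merely $p\geq 2$, and it is worth being explicit that the limiting case $p=2$, $q=2$ would give $\Int{0}{t}{\tau^{-1}}{d\tau} = +\infty$, so the bound genuinely uses $p>2$. A minor subtlety is justifying that $\nabla\hat{u}_{\mathrm{hom}}$ is literally the convolution $G_\tau *_x \nabla\hat{u}_0$ as an $L^\infty$ function (differentiation under the integral sign / density of smooth functions), but since $\hat{u}_0 \in H^1(\R^2)$ this is standard and can be dispatched briefly.
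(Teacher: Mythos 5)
Your proposal is correct and follows essentially the same route as the paper's proof: the same splitting of \eqref{eqn:solution convolution Green's function} with $|a+b|^2\leq 2|a|^2+2|b|^2$, reuse of Lemma \ref{lem:boundary flux uhat0=0} for the source part, moving the gradient onto $\hat{u}_0$ via commutation with convolution, the $L^\infty$ bound from Part \ref{lem:part inifinity bound, properties convolution} of Lemma \ref{lem:properties convolution} combined with Part \ref{lem:part Lp bound gradient, properties Green's function} of Lemma \ref{lem:properties Green's function}, and the integration of $\tau^{\frac{2}{q}-2}$ using $q<2$. The only trivial discrepancies are constant bookkeeping (the factor $2$ from the splitting is absorbed into $C$) and your aside about $q=1$: the lemma does permit $p=\infty$, and the formula $\frac{q}{2-q}t^{\frac{2}{q}-1}$ remains valid there.
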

\begin{proof}
%In this case, the solution of \eqref{eqn:system point source uhat} is -- cf.~\eqref{eqn:solution convolution Green's function} -- given by
%\begin{equation}
%\hat{u}(x,t)=\Int{\R^2}{}{G_t(x-y)\hat{u}_0(y)}{dy} + \Int{0}{t}{G_{t-s}(x)\bar{\phi}(s)}{ds}.
%\end{equation}
In this case, the solution of \eqref{eqn:system point source uhat} is given by \eqref{eqn:solution convolution Green's function}. We start with the following estimate
\begin{align}
\nonumber \Int{0}{t}{\|d\nabla\hat{u}(x,\tau)\cdot n(x)\|^2_{L^2(\Gamma)}}{d\tau} &\leq 2\Int{0}{t}{\Int{\Gamma}{}{\left|d\nabla\Int{\R^2}{}{G_\tau(x-y)\hat{u}_0(y)}{dy}\cdot n(x)\right|^2}{d\sigma}}{d\tau}\\
&\hspace{-0.5cm}+ 2\Int{0}{t}{\Int{\Gamma}{}{\left|d\nabla\Int{0}{\tau}{G_{\tau-s}(x)\bar{\phi}(s)}{ds}\cdot n(x)\right|^2}{d\sigma}}{d\tau}.\label{eqn:estimate flux two parts}
\end{align}
The second term on the right-hand side is covered by Lemma \ref{lem:boundary flux uhat0=0}. Regarding the first term, we remark that, due to properties of the convolution,
%\begin{align}
%\left|d\,\nabla\Int{\R^2}{}{G_\tau(x-y)\hat{u}_0(y)}{dy} \cdot n(x)  \right| &\leq d\,\left\|\Int{\R^2}{}{\nabla G_\tau(x-y)\hat{u}_0(y)}{dy} \right\|\\
%&\leq d\,\|\nabla G_\tau(\cdot)\|_{L^\infty(\R^2)}\,\|\hat{u}_0\|_{L^1(\R^2)}.
%\end{align}
%It follows from this estimate that
%\begin{align}
%\Int{0}{t}{\Int{\Gamma}{}{\left|d\nabla\Int{\R^2}{}{G_\tau(x-y)\hat{u}_0(y)}{dy}\cdot n(x)\right|^2}{d\sigma}}{d\tau} &\leq d^2\Int{0}{t}{\Int{\Gamma}{}{\|\nabla G_\tau(\cdot)\|^2_{L^\infty(\R^2)}\,\|\hat{u}_0\|^2_{L^1(\R^2)}}{d\sigma}}{d\tau}\\
%&= d^2\,|\Gamma|\,\|\hat{u}_0\|^2_{L^1(\R^2)}\Int{0}{t}{\|\nabla G_\tau(\cdot)\|^2_{L^\infty(\R^2)}\,}{d\tau}.
%\end{align}
\begin{equation}\label{eqn:convolution variable interchange}
\left|d\,\nabla\Int{\R^2}{}{G_\tau(x-y)\hat{u}_0(y)}{dy} \cdot n(x)  \right| = \left|d\,\Int{\R^2}{}{G_\tau(y)\nabla\hat{u}_0(x-y)}{dy} \cdot n(x)  \right|.
\end{equation}
We use Part \ref{lem:part inifinity bound, properties convolution} of Lemma \ref{lem:properties convolution} to estimate the right-hand side
\begin{align}
\nonumber\left|d\,\Int{\R^2}{}{G_\tau(y)\nabla\hat{u}_0(x-y)}{dy} \cdot n(x)  \right| &\leq d\,\left\| \Int{\R^2}{}{G_\tau(y)\nabla\hat{u}_0(\cdot-y)}{dy} \right\|_{L^\infty(\R^2)}\\
&\leq d\,\left\|\nabla\hat{u}_0 \right\|_{L^p(\R^2)} \, \left\|G_\tau\right\|_{L^q(\R^2)},\label{eqn:bound gradient initial condition}
\end{align}
with $q:=p/(p-1)$.\\
It follows from \eqref{eqn:convolution variable interchange}--\eqref{eqn:bound gradient initial condition} and Part \ref{lem:part Lp bound gradient, properties Green's function} of Lemma \ref{lem:properties Green's function} that
\begin{align}
\nonumber \Int{0}{t}{\Int{\Gamma}{}{\left|d\nabla\Int{\R^2}{}{G_\tau(x-y)\hat{u}_0(y)}{dy}\cdot n(x)\right|^2}{d\sigma}}{d\tau}&\\
\nonumber &\hspace{-1cm}\leq d^2\,\left\|\nabla\hat{u}_0 \right\|^2_{L^p(\R^2)}\, \Int{0}{t}{\Int{\Gamma}{}{\left\|G_\tau\right\|^2_{L^q(\R^2)}}{d\sigma}}{d\tau}\\
\nonumber &\hspace{-1cm}\leq c^2\,d^2\,|\Gamma|\,\left\|\nabla\hat{u}_0 \right\|^2_{L^p(\R^2)}\Int{0}{t}{\tau^{\frac2q-2}}{d\tau}\\
&\hspace{-1cm}=\frac{q\, c^2\,d^2\,|\Gamma|}{2-q}\,t^{\frac2q-1}\left\|\nabla\hat{u}_0 \right\|^2_{L^p(\R^2)},\label{eqn:estimate convolution uhat}
\end{align}
where $c$ depends on $q$ and $d$. We can perform the integration in time in the last step of \eqref{eqn:estimate convolution uhat} since the hypothesis $p>2$ implies $q<2$. The desired result follows by \eqref{eqn:estimate flux two parts} and the calculations in the proof of Lemma \ref{lem:boundary flux uhat0=0}:
\begin{align}
\nonumber \Int{0}{t}{\|d\nabla\hat{u}(x,\tau)\cdot n(x)\|^2_{L^2(\Gamma)}}{d\tau} \leq& \, \frac{2q \,c^2\,d^2\,|\Gamma|}{2-q}\,t^{\frac2q-1}\left\|\nabla\hat{u}_0 \right\|^2_{L^p(\R^2)}\\
&\hspace{-0.5cm}+  2d^2\,\Int{0}{t}{\| \bar{\phi} \|^2_{L^1(0,\tau)}{d\tau}\,\Int{\Gamma}{}{\|\nabla G_\cdot(x)\|^2_{L^\infty(0,\infty)}}}{d\sigma},
\end{align}
of which the right-hand side is finite for all finite $t$.
\end{proof}
\begin{remark}
A sufficient condition for $\nabla\hat{u}_0 \in L^p(\R^2)$ to hold, is $\hat{u}_0 \in W^{1,p}(\R^2)$. To this aim, one may start from $u_0 \in W^{1,p}(\Omega)$ to hold for the \textit{given} initial data. The remaining question is whether it is possible to find an extension $v_0$ on $\OO$ as in \eqref{eqn:extended initial condition} such that $\hat{u}_0 \in W^{1,p}(\R^2)$. This, however is guaranteed by Theorem 5.22 on p.~151 of \cite{AdamsFournier}.
\end{remark}
\begin{remark}
It is crucial that the gradient is applied to the initial condition in the computations starting at \eqref{eqn:convolution variable interchange} and further. Instead of \eqref{eqn:convolution variable interchange}--\eqref{eqn:bound gradient initial condition}, we could, along the same lines, have estimated
\begin{equation}
\left|d\,\nabla\Int{\R^2}{}{G_\tau(x-y)\hat{u}_0(y)}{dy} \cdot n(x)  \right| \leq d\,\left\|\hat{u}_0 \right\|_{L^p(\R^2)} \, \left\|\nabla G_\tau\right\|_{L^q(\R^2)},
\end{equation}
which requires only a condition on $\hat{u}_0$, not on its gradient, for the lemma. It follows from \cite{Hille_JEE} (p.~432, bottom) that for some constant $C$
\begin{equation}
\left\|\nabla G_\tau\right\|_{L^q(\R^2)}\leq C\,\tau^{\frac1q -\frac32}.
\end{equation}
This is a problem however, since similar arguments as in \eqref{eqn:estimate convolution uhat} would lead to
\begin{equation}
\Int{0}{t}{\left\|\nabla G_\tau\right\|^2_{L^q(\R^2)}}{d\tau}\leq C\Int{0}{t}{\tau^{\frac2q -3}}{d\tau},
\end{equation}
of which the right-hand side is not integrable for any $1\leq q\leq \infty$.
\end{remark}
We now come to the summarizing result of this section.
\begin{theorem}\label{thm:claim boundary}
Assume that the hypotheses of Theorems \ref{thrm:denk et al}  and \ref{thm:reg} and Lemma \ref{lem:boundary flux} hold. Then, for all $t>0$ the function $c^*$ defined by \eqref{eq:def c-star} satisfies
\begin{equation}\label{eq:estimate c-star}
c^*(t) \leq 2\int_0^t\|\phi\|^2_{L^2(\Gamma)} +
2d^2|\Gamma|C t^{\frac{2}{q}-1} \|\nabla \hat{u}_0\|^2_{L^p(\R^2)} +2C_\Gamma \int_0^t \|\bar{\phi}\|^2_{L^1(0,\tau)}\,d\tau.
\end{equation}

\end{theorem}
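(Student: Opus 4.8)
The plan is to reduce the estimate for $c^*(t)$ directly to Lemma \ref{lem:boundary flux}, splitting the integrand of \eqref{eq:def c-star} into a contribution of the prescribed flux $\phi$ and a contribution of the flux generated by $\hat u$ on $\Gamma$. First I would apply the elementary inequality $\|a-b\|^2\leq 2\|a\|^2+2\|b\|^2$, valid in the inner-product space $L^2(\Gamma)$ for each fixed $\tau$, to the integrand, obtaining for almost every $\tau\in[0,t]$
\[
\|\phi(\tau)-d\nabla\hat u(\tau)\cdot n\|^2_{L^2(\Gamma)}\leq 2\|\phi(\tau)\|^2_{L^2(\Gamma)}+2\|d\nabla\hat u(\tau)\cdot n\|^2_{L^2(\Gamma)}.
\]
Integrating over $\tau\in[0,t]$ then yields
\[
c^*(t)\leq 2\int_0^t\|\phi(\tau)\|^2_{L^2(\Gamma)}\,d\tau+2\int_0^t\|d\nabla\hat u(\tau)\cdot n\|^2_{L^2(\Gamma)}\,d\tau.
\]

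Next I would bound the second term on the right by Lemma \ref{lem:boundary flux}, whose hypotheses ($\nabla\hat u_0\in L^p(\R^2)$ for some $2<p<\infty$ and $\bar\phi\in H^1([0,T])$) are precisely those assumed here, to get
\[
2\int_0^t\|d\nabla\hat u(\tau)\cdot n\|^2_{L^2(\Gamma)}\,d\tau\leq 2d^2|\Gamma|C\,t^{\frac2q-1}\|\nabla\hat u_0\|^2_{L^p(\R^2)}+4d^2C_\Gamma\int_0^t\|\bar\phi\|^2_{L^1(0,\tau)}\,d\tau,
\]
with $q=p/(p-1)$. Combining the two displays gives exactly \eqref{eq:estimate c-star}, the harmless numerical and $d^2$ factors in the last term being absorbed into $C_\Gamma$ (and $C$). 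Finiteness for every $t>0$ is then immediate: $\phi\in L^2([0,T];L^2(\Gamma))$ makes the first term finite, and the remark preceding Lemma \ref{lem:boundary flux uhat0=0} records $\int_0^t\|\bar\phi\|^2_{L^1(0,\tau)}\,d\tau\leq\tfrac12 t^2\|\bar\phi\|^2_{L^2([0,T])}<\infty$, while the middle term is finite since $2/q-1>0$.

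I do not expect any genuine obstacle: the entire analytic content of the theorem is carried by Lemma \ref{lem:boundary flux}, and what remains is the triangle-type inequality together with bookkeeping of constants. The only point deserving a line of care is matching the constant names in the statement — in particular confirming that the $2d^2C_\Gamma$ produced by the lemma, doubled by the splitting above, is what the $2C_\Gamma$ in \eqref{eq:estimate c-star} stands for after relabeling.
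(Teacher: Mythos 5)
Your proposal is correct and is essentially identical to the paper's own proof: the paper likewise splits $c^*(t)$ via $\|a-b\|^2\leq 2\|a\|^2+2\|b\|^2$ and then invokes Lemma \ref{lem:boundary flux} for the $\hat{u}$-term. Your closing remark about the constants is well taken --- the direct computation produces $4d^2C_\Gamma$ rather than the $2C_\Gamma$ written in \eqref{eq:estimate c-star}, so the theorem's last coefficient is indeed only correct up to such a relabeling.
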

\begin{proof}
The statement of this theorem is a direct consequence of the observation
\begin{equation}\label{eqn:estimate flux difference in sum Minkovski}
\Int{0}{t}{\|\phi-d\nabla\hat{u}\cdot n\|^2_{L^2(\Gamma)}}{} \leq 2\Int{0}{t}{\|\phi\|^2_{L^2(\Gamma)}}{} + 2\Int{0}{t}{\|d\nabla\hat{u}\cdot n\|^2_{L^2(\Gamma)}}{}.
\end{equation}
The first term is finite due to the assumption that $\phi\in L^2([0,T];L^2(\Gamma))$ for all $T\in\Rp$ (see Section \ref{sec:two problems}). The second term was estimated in Lemma \ref{lem:boundary flux}.
\end{proof}
\begin{remark}
Estimate \eqref{eq:estimate c-star} is unsatisfactory for $t$ close to zero. However, it shows for large $t$ that on the long run the difference between the fluxes on $\Gamma$ is dominated by the prescribed fluxes $\phi$ at $\Gamma$ and $\bar{\phi}$ at the point source at 0, rather than the initial condition, which is clear intuitively. In Section \ref{sec:conjecture} we provide a further discussion of the behaviour of $c^*(t)$.
\end{remark}

\section{Estimates in the exterior -- Proof of Theorem \ref{thm:estimate u exterior}}\label{sec:exterior estimates}

We can now proof our main result, an estimate for the difference between the solutions $u$ of \eqref{eqn:system Full problem} and $\hat{u}$ of \eqref{eqn:system point source uhat} (using the solution concept explained in Section \ref{sec:soln reg}):

\begin{proof} {\it (Theorem \ref{thm:estimate u exterior}).}\
Let $\psi\in C^\infty_c(\overline{\Omega})$ and $h\in C^\infty_c([0,T])$ be test functions. Put $(\psi\otimes h)(x,t):=\psi(x)h(t)$. Then according to Theorem \ref{thrm:denk et al} and Theorem \ref{thm:reg} one has
\begin{align}
\nonumber \pair{\partial_tu -\partial_t\hat{u}}{\psi\otimes h} &= d\pair{\Delta u - \Delta \hat{u}}{\psi\otimes h}\\
&= \int_0^T\left\{\Int{\Gamma}{}{\left(\phi(t)-d\nabla\hat{u}(t)\cdot n\right)\psi}{}\right\}h(t)dt \label{eq:computation norm} \\
& \qquad - d\int_0^T\left\{\Int{\Omega}{}{\left(\nabla u - \nabla \hat{u}\right)\cdot\nabla\psi}{}\right\}h(t)dt.\nonumber
\end{align}
Because of the regularity of the solutions $u$ and $\hat{u}$ identity \eqref{eq:computation norm} extends to functions $f\in L^1([0,T],H^1(\Omega))$ by continuity:
\begin{align}
\nonumber \pair{\partial_tu -\partial_t\hat{u}}{f} & = \Int{0}{T}{\Int{\Gamma}{}{\left(\phi(t)-d\nabla\hat{u}(t)\cdot n\right)f(x,t)}{d\sigma(x)}}{dt}\\
&\qquad - d\Int{0}{T}{\Int{\Omega}{}{\left(\nabla u - \nabla \hat{u}\right)\cdot\nabla f(x,t)}{dx}}{dt}.\label{eq:aux towards norm}
\end{align}
Now take $f(x,t):=(u(x,t)-\hat{u}(x,t))h(t)$ with $h\in C^\infty_c([0,T])$ arbitrary. Then the regularity of $u$ and $\hat{u}$ and \eqref{eq:aux towards norm} imply that
\begin{equation}
\dfrac12 \dfrac{d}{dt}\|u-\hat{u}\|^2_{L^2(\Omega)} + d \|\nabla u-\nabla \hat{u}\|^2_{L^2(\Omega)} = \Int{\Gamma}{}{(u-\hat{u})(\phi-d\nabla\hat{u}\cdot n)}{}.
\end{equation}
Add $d\|u-\hat{u}\|^2_{L^2(\Omega)}$ to both sides and integrate in time from $0$ to arbitrary $t$:
\begin{equation}
\dfrac12 \|u-\hat{u}\|^2_{L^2(\Omega)} + d \Int{0}{t}{\|u-\hat{u}\|^2_{H^1(\Omega)}}{} = \Int{0}{t}{\Int{\Gamma}{}{(u-\hat{u})(\phi-d\nabla\hat{u}\cdot n)}{}}{} + d\Int{0}{t}{\|u-\hat{u}\|^2_{L^2(\Omega)}}{},\label{eqn:integrated}
\end{equation}
where we have used that $u$ and $\hat{u}$ are initially equal on $\Omega$. Apply the Cauchy-Schwarz inequality and use the result of Theorem \ref{thm:claim boundary} to obtain
\begin{align}
\nonumber\Int{0}{t}{\Int{\Gamma}{}{(u-\hat{u})(\phi-d\nabla\hat{u}\cdot n)}{}}{} &\leq \left(\Int{0}{t}{\|u-\hat{u}\|^2_{L^2(\Gamma)}}{}\right)^{\frac12}\,\left(\Int{0}{t}{\|\phi-d\nabla\hat{u}\cdot n\|^2_{L^2(\Gamma)}}{}\right)^{\frac12}\\
&= \sqrt{c^*(t)}\,\left(\Int{0}{t}{\|u-\hat{u}\|^2_{L^2(\Gamma)}}{}\right)^{\frac12}.\label{eqn:innerproduct Cauchy-Schwarz}
\end{align}
Since $H^1(\Omega)\hookrightarrow L^2(\Gamma)$, according to the Boundary Trace Imbedding Theorem (cf.~\cite{AdamsFournier}, Theorem 5.36, p.~164) there is a constant $\bar{c}=\bar{c}(\Omega)>0$ such that
\begin{equation}
\|u-\hat{u}\|_{L^2(\Gamma)}\leq \bar{c} \, \|u-\hat{u}\|_{H^1(\Omega)},
\end{equation}
which can be used to further estimate \eqref{eqn:innerproduct Cauchy-Schwarz}:
\begin{align}
\Int{0}{t}{\Int{\Gamma}{}{(u-\hat{u})(\phi-d\nabla\hat{u}\cdot n)}{}}{} &\leq \sqrt{c^*(t)}\,\bar{c}\,\left(\Int{0}{t}{\|u-\hat{u}\|^2_{H^1(\Omega)}}{}\right)^{\frac12}.
\end{align}
For arbitrary $\ep>0$, Young's inequality yields the following estimate on the right-hand side:
\begin{align}
\sqrt{c^*(t)}\,\bar{c}\,\left(\Int{0}{t}{\|u-\hat{u}\|^2_{H^1(\Omega)}}{}\right)^{\frac12} &\leq \frac{1}{2\ep}c^*(t)\bar{c}^2 + \frac{\ep}{2}\Int{0}{t}{\|u-\hat{u}\|^2_{H^1(\Omega)}}{}.\label{eqn:Young}
\end{align}
Take $\ep\in(0,2d)$. Then \eqref{eqn:integrated}--\eqref{eqn:Young} together yield
\begin{equation}
\|u-\hat{u}\|^2_{L^2(\Omega)} + (2d-\ep) \Int{0}{t}{\|u-\hat{u}\|^2_{H^1(\Omega)}}{} \leq \frac{1}{\ep}c^*(t)\bar{c}^2 + 2d\Int{0}{t}{\|u-\hat{u}\|^2_{L^2(\Omega)}}{},\label{eqn:ineq before Gronwall}
\end{equation}
or
\begin{equation}
\|u-\hat{u}\|^2_{L^2(\Omega)} + \underbrace{(2d-\ep) \Int{0}{t}{\|\nabla u-\nabla\hat{u}\|^2_{L^2(\Omega)}}{}}_{\geq0} \leq \frac{1}{\ep}c^*(t)\bar{c}^2 + \ep\Int{0}{t}{\|u-\hat{u}\|^2_{L^2(\Omega)}}{}.\label{eqn:final ineq before Gronwall}
\end{equation}
It follows that
\begin{align}
\|u-\hat{u}\|^2_{L^2(\Omega)} &\leq \frac{1}{\ep}c^*(t)\bar{c}^2 + \ep\Int{0}{t}{\|u-\hat{u}\|^2_{L^2(\Omega)}}{},
\end{align}
and due to a version of Gronwall's lemma\footnote{A specific form of Theorem 1 on p.~356 of \cite{Mitrinovic}.}
\begin{align}
\|u-\hat{u}\|^2_{L^2(\Omega)} &\leq \frac{1}{\ep}c^*(t)\bar{c}^2\,\mathrm{e}^{\ep t},\label{eqn:bound L2}
\end{align}
%where we use that $c^*(\cdot)$ is (by definition) non-decreasing. Note that $\ep$ is arbitrary but fixed\footnote{Although the use of the letter $\ep$ might misleadingly suggest that we intend to take the limit $\ep\to 0$, this is not the case.}, thus $1/\ep<\infty$. We obtain \eqref{eqn:thm bound L2} by defining $c_1:=\bar{c}^2/\ep$.\\
where we use that $c^*(\cdot)$ is (by definition) non-decreasing. Note that $\ep$ is arbitrary but fixed, thus $1/\ep<\infty$. We obtain \eqref{eqn:thm bound L2} by defining $c_1:=\bar{c}^2/\ep$.\\
From \eqref{eqn:ineq before Gronwall} it also follows that
\begin{align}
\Int{0}{t}{\|u-\hat{u}\|^2_{H^1(\Omega)}}{} &\leq \frac{1}{\ep(2d-\ep)}c^*(t)\bar{c}^2 + \frac{2d}{2d-\ep}\Int{0}{t}{\|u-\hat{u}\|^2_{L^2(\Omega)}}{}.
\end{align}
The upper bound \eqref{eqn:bound L2} now implies
\begin{align}
\nonumber \Int{0}{t}{\|u-\hat{u}\|^2_{H^1(\Omega)}}{} &\leq \frac{1}{\ep(2d-\ep)}c^*(t)\bar{c}^2 + \frac{2d}{\ep^2(2d-\ep)}c^*(t)\bar{c}^2\,(\textrm{e}^{\ep t}-1)\\
&\leq \frac{2d}{\ep^2(2d-\ep)}c^*(t)\bar{c}^2\,\textrm{e}^{\ep t},\label{eqn:estimate integrated H1 norm final}
%&= \frac{c^*(t)\bar{c}^2}{\ep(2d-\ep)}\textrm{e}^{2dt}.
\end{align}
where we use that $\ep<2d$ in the second step. The second statement of the theorem now follows by defining $c_2:=2d\bar{c}^2/(\ep^2(2d-\ep))$.
%By taking $\ep=d$ we can optimize this estimate. Since $c^*(\cdot)$ is non-decreasing and $t\leq T$, we obtain the claim of the theorem by defining $c_1:=c^*(T)\bar{c}^2\,\exp(2dT)/d^2$.
\end{proof}
\begin{remark}
In principle, \eqref{eqn:estimate integrated H1 norm final} can be optimized in $\ep$ for every $t$ separately, to get an optimal $\ep=\ep(t)$. After substitution of this $\ep(t)$, \eqref{eqn:thm bound H1 integrated} becomes independent of $\ep$. However, its $t$-dependence obviously becomes more complicated. Further details on this aspect are omitted here.
\end{remark}
\begin{remark}
The fact that the estimates in Theorem \ref{thm:estimate u exterior} are linear in $c^*$ relates nicely to our Conjecture \ref{conj:c* small}; see Section \ref{sec:conjecture} below. If indeed $c^*$ is small or even goes to zero, then the same holds for $\|u(\cdot,t)-\hat{u}(\cdot,t)\|^2_{L^2(\Omega)}$ and $\Int{0}{t}{\|u-\hat{u}\|^2_{H^1(\Omega)}}{}$.
\end{remark}

%\begin{theorem}\label{thm:estimate du/dt interior}
%For all $t\in[0,T]$ there is a $\gamma>0$ such that
%\begin{equation}
%\Int{0}{t}{\left\|\partial_t u -\partial_t\hat{u}\right\|^2_{L^2(\Omega)}}{} \leq \gamma.
%\end{equation}
%\end{theorem}
%\begin{proof}
%Use \eqref{eqn:part int on pair du/dt psi} for suitable $\psi$.\\
%\ldots
%\end{proof}

\section{Conjecture}
\label{sec:conjecture}
The estimate \eqref{eqn:estimate flux difference in sum Minkovski} is a very crude way to find an upper bound on $c^*(t)$. In the following (deliberately vague) conjecture, we express under which conditions we expect $c^*(t)$ to be smaller than the upper bound of Theorem \ref{thm:claim boundary} suggests.

\begin{conjecture}\label{conj:c* small}
The upper bound $c^*$ can be much smaller than Theorem \ref{thm:claim boundary} suggests. Ideally it goes to zero.
\end{conjecture}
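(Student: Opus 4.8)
Since the statement above is a \emph{conjecture} rather than a theorem, what follows is not a proof but a proposal for how one might sharpen it into a provable assertion and attack it. The freedom available in Problem~\eqref{eqn:system point source uhat} is the choice of the source magnitude $\bar{\phi}$ and of the interior extension $v_0$ of the initial datum in \eqref{eqn:extended initial condition}. The plan is to regard
\[
c^*(t)=\Int{0}{t}{\|\phi(\tau)-d\nabla\hat{u}(\tau)\cdot n\|^2_{L^2(\Gamma)}}{d\tau}
\]
as a cost functional over admissible pairs $(\bar{\phi},v_0)$ and to exhibit a near-optimal choice. The natural candidate, dictated by conservation of mass, is $\bar{\phi}(t):=\int_\Gamma\phi(\cdot,t)\,d\sigma$, so that the point source injects precisely the total influx crossing $\Gamma$ in the full model, together with $v_0$ chosen harmonic (or simply $v_0\equiv0$) inside $\OO$. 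One then tries to show that $c^*(t)$ is small for this choice, in a regime still to be identified, with the crude bound of Theorem~\ref{thm:claim boundary} serving only as the worst case in which all cancellation is thrown away.

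The technical backbone is the decomposition already used in the proof of Lemma~\ref{lem:boundary flux}: write $\hat{u}=(G_t*_x\hat{u}_0)+(G_\cdot(x)*_t\bar{\phi})$, so that on $\Gamma$ one has $d\nabla\hat{u}\cdot n=A(x,t)+B(x,t)$, where $A$ comes from the initial data and $B$ from the source. Using $\nabla G_\tau(y)=-\tfrac{y}{2d\tau}G_\tau(y)$ one gets, for $x\in\Gamma$,
\[
B(x,t)=-\Int{0}{t}{\frac{x\cdot n(x)}{2(t-s)}\,G_{t-s}(x)\,\bar{\phi}(s)}{ds},
\]
where, since $0$ lies inside $\OO$, the geometric factor $-x\cdot n(x)$ is positive along $\Gamma$ (it equals the radius $R$ when $\Gamma$ is a circle centred at $0$). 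The first task is to show that $A$ is negligible: the bound $\|A(\cdot,t)\|_{L^2(\Gamma)}\leq C\,t^{1/q-1}\|\nabla\hat{u}_0\|_{L^p(\R^2)}$ extracted from Lemma~\ref{lem:boundary flux} shows it is square-integrable in time and decays as $t\to\infty$, and with the harmonic choice of $v_0$ one can hope to improve its size at small $t$ by exploiting that $\hat{u}_0$ is then continuous across $\Gamma$. The second, and main, task is to compare $B(x,t)$ with the prescribed $\phi(x,t)$ pointwise on $\Gamma$.

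For this I would run a multipole / boundary-layer argument. Expand $\phi-B$ on $\Gamma$ in a basis adapted to the curve (for $\Gamma$ a circle of radius $R$ centred at $0$, the angular Fourier modes): the choice $\bar{\phi}(t)=\int_\Gamma\phi\,d\sigma$ matches the zeroth mode, and for a circle centred at $0$ all higher modes of $B$ vanish by symmetry, leaving only a zeroth-mode mismatch between the angular average $\bar{\phi}(t)/|\Gamma|$ and the time-convolution defining $B$. That mismatch is controlled by Corollary~\ref{cor:Lr bound conv time} once one observes that the kernel $s\mapsto-\tfrac{x\cdot n(x)}{2(t-s)}G_{t-s}(x)$ is, for $x\in\Gamma$, a positive kernel of total mass $1/|\Gamma|$ that concentrates near $s=t$ when $\OO$ is small and spreads out as $t\to\infty$ --- precisely the mechanism by which diffusion ``forgets'' the fine structure of the boundary data. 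The genuine obstacle is the angular/shape information: a single scalar $\bar{\phi}(t)$ cannot reproduce a $\phi$ with large angular variation, nor the normal-derivative profile induced by a non-circular $\Gamma$, at small times, so $c^*(t)$ is \emph{not} small in that regime. Consequently the conjecture should be expected to hold only asymptotically --- for $t\gg\mathrm{diam}(\OO)^2/d$, for $\phi$ close to radially symmetric about $0$, or under an appropriate rescaling of the object --- and identifying the weakest such hypothesis, together with the attendant uniform-in-$t$ control of these time-convolution kernels, is the crux of turning this conjecture into a theorem.
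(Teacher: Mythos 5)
The statement you were asked about is a \emph{conjecture}; the paper offers no proof of it, only the list of heuristic considerations following Conjecture 8.1 (freedom in choosing $\bar{\phi}$ and $v_0$, a small-time regime governed by the initial datum, a large-time regime governed by the source with a delay of order $|\OO|^2/d$, and the favourable role of radial symmetry of $\Gamma$ and of $\phi$). Your proposal is honest about this, and in substance it is a concretization of exactly those considerations rather than a different route: you fix the mass-conserving candidate $\bar{\phi}(t)=\int_\Gamma\phi(\cdot,t)\,d\sigma$, split the flux on $\Gamma$ into an initial-data part $A$ and a source part $B$ as in Lemma \ref{lem:boundary flux}, compute the source kernel explicitly, and identify the angular mismatch at small times as the obstruction --- which matches the authors' own bullets on the timescale $|\OO|^2/d$ and on radial symmetry. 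Two of your additions go beyond the paper and are worth keeping: the identity $d\,\nabla G_{t-s}(x)\cdot n(x)=-\tfrac{x\cdot n(x)}{2(t-s)}G_{t-s}(x)$, and the computation that for a circle of radius $R$ centred at $0$ this kernel integrates over all time to $1/(2\pi R)=1/|\Gamma|$, which quantifies the paper's informal remark that eventually the mimicking flux is ``mainly due to $\bar{\phi}$''. Two caveats: positivity of the factor $-x\cdot n(x)$ on all of $\Gamma$ requires $\OO$ to be star-shaped with respect to the origin, not merely to contain it, so the positive-kernel structure is not available for general $C^2$ boundaries; and the vanishing of the higher angular modes of $B$ holds only in the radially symmetric case, which is precisely the special case the paper itself singles out. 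There is no gap relative to the paper --- the paper proves nothing here --- and your programme, including your conclusion that the conjecture can only hold asymptotically in time or under symmetry/smallness hypotheses, is a sharper restatement of the authors' own heuristics.
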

Conjecture \ref{conj:c* small} is based on the following considerations:
\begin{itemize}
  \item Once the geometry and $\phi$ on $\Gamma$ are given, there still is a lot of freedom in dealing with the reduced problem \eqref{eqn:system point source uhat}. We can choose $\bar{\phi}$ and $v_0$. Our conjecture is that a smart choice of $\bar{\phi}$ and $v_0$ can produce a flux on $\Gamma$ that mimics well $\phi$ and gives more than merely a \textit{bounded} difference.
  \item Initially, during a small time interval, the initial condition should induce a sufficiently close flux. To this aim an appropriate $v_0$ has to be provided.
  \item At a certain moment, mass originating from the source starts reaching the boundary. From then onwards, the mimicking flux should be -- with some delay -- mainly due to $\bar{\phi}$.
  \item Let $|\OO|$ denote a typical length scale of the object $\Obj$ (e.g.~its diameter). The quantity $|\OO|^2/d$ is a typical timescale for points to travel the distance from source to boundary. This is also the timescale at which the transition between the above two bullet points takes place.
  \item The shape of object $\Obj$ is important. An intuitive guess is that a small object $\Obj$ can be better approximated. As the point source emits mass at the same rate in all directions, we expect a better approximation also to be possible if $\Gamma$ is radially symmetric with respect to the origin, and $\phi$ is constant on $\Gamma$ (in space, not necessarily in time). A generalization of the latter condition would be to have $\phi$ defined on a more general $\Gamma$, but to have an extension to a ball $B(0,R)$ such that $\Gamma\subset B(0,R)\subset\R^2$, and this extension is radially symmetric around the origin on $B(0,R)$.
\end{itemize}
The above statement was written under the assumption that in general the (normal component of the) flux is directed outward on $\Gamma$. For a mass sink, \textit{mutatis mutandis} the same considerations hold.

%For acknowledgements section, please don't number the section, please begin it with \section*{Acknowledgements}
\section*{Acknowledgements}
This work was started after fruitful discussions during the workshop ``Modelling with Measures: from Structured Populations to Crowd Dynamics", organized at the Lorentz Center in Leiden, The Netherlands.\\
We thank Jan de Graaf, Georg Prokert, Patrick van Meurs, Upanshu Sharma (Eindhoven), Ulrich R\"ude (Erlangen) and Thomas Seidman (Baltimore) for sharing their thoughts with us.
% You may incorporate your references as follows in your main tex file.
% Using BibTex is not recommended but can be handled.

\begin{appendix}
\section*{Appendix -- Proof of Theorem \ref{thm:reg}}\label{app:proof reg}
\begin{proof}
Note that for $\hat{u}_0\in H^1(\R^2)$, the function $\hat{u}_1:=G*_x\hat{u}_0$ is a solution of
\begin{align}
\left\{
  \begin{array}{ll}
    \dfrac{\partial u}{\partial t}= d \Delta u, & \hbox{on $\R^2\times\Rp$;} \\
    u(0) = \hat{u}_0, & \hbox{on $\R^2$,}
  \end{array}
\right.
\end{align}
which is unique and satisfies
\begin{equation}
\hat{u}_1 \in H^1([0,T];L^2(\R^2))\cap L^2([0,T];H^2(\R^2))
\end{equation}
due to \cite{Denk}, Theorem 2.1, where the domain is taken to be $\R^2$.\\
\\
Define $\hat{u}_2:= G*_t\bar{\phi}$. Then $\hat{u}_2$ satisfies
\begin{align}
\nonumber \|\hat{u}_2\|_{L^2(\Omega)} &= \left(\Int{\Omega}{}{\left|\Int{0}{t}{G_{t-s}(x)\,\bar{\phi}(s)}{ds}\right|^2}{dx}  \right)^{1/2}\\
\nonumber &\leq \Int{0}{t}{\left(\Int{\Omega}{}{\left| G_{t-s}(x)\,\bar{\phi}(s) \right|^2}{dx} \right)^{1/2}}{ds}\\
\nonumber &\leq \Int{0}{t}{\|G_{t-s}\|_{L^2(\R^2)}|\bar{\phi}(s)|}{ds}\\
&\leq \Int{0}{t}{c\,(t-s)^{-1/2}|\bar{\phi}(s)|}{ds}.\label{eqn:estimate uhat2}
\end{align}
In the second step we used Minkowski's inequality for integrals (see \cite{Stein}, p.~271), whereas the last inequality follows from Part \ref{lem:part Lp bound gradient, properties Green's function} of Lemma \ref{lem:properties Green's function}. Since $t\mapsto c\,t^{-1/2}\in L^1([0,T])$ and by assumption $\bar{\phi}\in L^2([0,T])$, Corollary \ref{cor:Lr bound conv time} applied to \eqref{eqn:estimate uhat2} yields
\begin{equation}\label{eqn:G*barPhi in L2}
\|\hat{u}_2\|_{L^2(\Omega)}\in L^2([0,T]).
\end{equation}
Because $G_\cdot(x)$ and $\partial_tG_\cdot(x)$ are in $L^1_{\mathrm{loc}}(\R_+)$ for $x\neq 0$ and $\partial_t\bar{\phi}\in L^2(\R_+)$, one has in the sense of distributions
\begin{equation}\label{eq:derivative convolution}
\partial_t\bigl( G_\cdot(x)*\bar{\phi} \bigr) = \bigl( \partial_t G_\cdot(x)\bigr)*\bar{\phi} = G_\cdot(x) * \bigl(\partial_t \bar{\phi}\bigr).
\end{equation}
Thus we can repeat the argument leading to \eqref{eqn:G*barPhi in L2}, replacing $\bar{\phi}$ by $\partial_t\bar{\phi}$, and obtain
\begin{equation}\label{eqn:d/dt G*barPhi in L2}
\|\partial_t\hat{u}_2\|_{L^2(\Omega)}\in L^2([0,T]).
\end{equation}
We conclude from \eqref{eqn:G*barPhi in L2} and \eqref{eqn:d/dt G*barPhi in L2} that
\begin{equation}\label{eqn:hat u2 in H1 time L2 space}
\hat{u}_2 \in H^1([0,T];L^2(\Omega)).
\end{equation}
It follows from \eqref{eqn:Green's function general dimension}, with $n=2$ that
\begin{align}
\partial_{x_i} G_t(x)&=\dfrac{x_i}{8\pi d^2t^2}\mathrm{e}^{-|x|^2/4dt},\,\text{and}\\
\partial_{x_i}\partial_{x_j}G_t(x) &= \dfrac{1}{8\pi d^2t^2}\mathrm{e}^{-|x|^2/4dt}\left[\delta_{ij}-\dfrac{x_ix_j}{2dt} \right],
\end{align}
where $\delta_{ij}$ denotes the Kronecker delta. The gradient is bounded in the following way:
\begin{align}
\nonumber |\nabla G_t(x)|^2 &\leq \sup_{t>0}|\nabla G_t(x)|^2\\
\nonumber &= \sup_{t>0}  \dfrac{|x|^2}{64\pi^2 d^4t^4}\mathrm{e}^{-|x|^2/2dt}\\
&= \dfrac{1}{|x|^6}\sup_{u>0}\dfrac{u^4}{4\pi^2}\mathrm{e}^{-u},
\end{align}
for all $t>0$ and for all $x\in\Omega$, where we substituted $u:=|x|^2/2dt$ to obtain the constant $c_1:=\sup_{u>0}\dfrac{u^4}{4\pi^2}\mathrm{e}^{-u}$, which is independent of $|x|$, $t$, $d$. Thus
\begin{equation}
|\nabla G_t(x)|^2 \leq \dfrac{c_1}{|x|^6}.\label{eqn:bound nabla G L2Omega}
\end{equation}
For a matrix $M\in\R^{n\times n}$, as matrix norm we use the Frobenius norm and denote it by $\|\cdot\|_F$:
\begin{equation}
\|M\|_F:= \sqrt{\sum_{i,j}|M_{ij}|^2}.
\end{equation}
In a similar way as for $\nabla G$, we estimate the Hessian matrix
\begin{align}
\nonumber \|D^2G_t(x)\|_F^2 &\leq \sup_{t>0}\left(\sum_{i=1}^2\sum_{j=1}^2 \dfrac{1}{64\pi^2 d^4t^4}\mathrm{e}^{-|x|^2/2dt}\left[\delta_{ij}-\dfrac{x_ix_j}{2dt} \right]^2  \right)\\
\nonumber &= \sup_{t>0}  \dfrac{1}{64\pi^2 d^4t^4}\mathrm{e}^{-|x|^2/2dt}\left(2-\dfrac{|x|^2}{dt}+\dfrac{|x|^4}{4d^2t^2}\right)\\
\nonumber &= \dfrac{1}{|x|^8}\sup_{u>0}\dfrac{u^4}{4\pi^2}\mathrm{e}^{-u}\left(2-2u+u^2\right)\\
&= \dfrac{c_2}{|x|^8},\label{eqn:bound D2 G L2Omega}
\end{align}
for all $t>0$ and for all $x\in\Omega$. Now we show that $\partial_{x_i}G_t$ and $\partial_{x_i}\partial_{x_j}G_t$ are in $L^2(\Omega)$, both with uniform upper bound in $t$:
\begin{align}
\nonumber\|\partial_{x_i}G_t\|^2_{L^2(\Omega)} &= \Int{\Omega}{}{|\partial_{x_i}G_t(x)|^2}{dx}\\
\nonumber &\leq \Int{\Omega}{}{|\nabla G_t(x)|^2}{dx}\\
&\stackrel{\eqref{eqn:bound nabla G L2Omega}}{\leq} \Int{\Omega}{}{\dfrac{c_1}{|x|^6}}{dx}=:C_1<\infty,\label{eqn:bound partial G L2 Omega}
\end{align}
where we use that $0$ is an interior point of $\OO=\R^2\setminus\overline{\Omega}$. Also
\begin{align}
\nonumber\|\partial_{x_i}\partial_{x_j}G_t\|^2_{L^2(\Omega)} &= \Int{\Omega}{}{|\partial_{x_i}\partial_{x_j}G_t(x)|^2}{dx}\\
\nonumber &\leq \Int{\Omega}{}{\|D^2 G_t(x)\|_F^2}{dx}\\
&\stackrel{\eqref{eqn:bound D2 G L2Omega}}{\leq} \Int{\Omega}{}{\dfrac{c_2}{|x|^8}}{dx}=:C_2<\infty.\label{eqn:bound second partial G L2 Omega}
\end{align}
For brevity, we now use the index notation for derivatives and, for $|\alpha|\in\{1,2\}$. Like in \eqref{eqn:estimate uhat2}, using Minkowski's integral inequality, we obtain that
\begin{equation}\label{eqn:bound dAlpha Minkowski}
\|\partial_x^\alpha \hat{u}_2\|_{L^2(\Omega)} \leq \Int{0}{t}{\|\partial_x^\alpha G_{t-s}\|_{L^2(\Omega)}|\bar{\phi}(s)|}{ds}.
\end{equation}
Due to \eqref{eqn:bound partial G L2 Omega}--\eqref{eqn:bound second partial G L2 Omega}, for each $|\alpha|\in\{1,2\}$ and for each $\tau>0$
\begin{equation}
\|\partial_x^\alpha G_{\tau}\|_{L^2(\Omega)}\in L^\infty([0,T])\subset L^1([0,T]).
\end{equation}
Hence, the fact that $\bar{\phi}\in L^2([0,T])$ yields via Part \ref{lem:part Lr bound, convolution time} of Corollary \ref{cor:Lr bound conv time} that
\begin{equation}\label{eqn:dAlpha in L2 0T}
\Int{0}{t}{\|\partial_x^\alpha G_{t-s}\|_{L^2(\Omega)}|\bar{\phi}(s)|}{ds} \in L^2([0,T]),
\end{equation}
for each $|\alpha|\in\{1,2\}$. It follows from \eqref{eqn:G*barPhi in L2}, \eqref{eqn:bound dAlpha Minkowski} and \eqref{eqn:dAlpha in L2 0T} that
\begin{equation}
\hat{u}_2\in L^2([0,T];H^2(\Omega)).
\end{equation}
Together with \eqref{eqn:hat u2 in H1 time L2 space}, this finishes the proof of the first part.

The last statement follows from \eqref{eq:derivative convolution} and a similar result for the spatial derivatives. For all $\psi\in C^\infty_c(\overline{\Omega})$ and $h\in C^\infty_c(\R_+)$, $\psi\otimes h(x,t):=\psi(x)h(t)$ is in $C^\infty_c(\overline{\Omega}\times\R_+)$ and one has
\begin{align*}
\pair{\partial_t \hat{u}}{\psi\otimes h} &= \pair{\bigl(\partial_t G_\cdot) *_x \hat{u}_0}{\psi\otimes h} + \int_{\overline{\Omega}} \pair{\partial_t\bigl[ G_\cdot(x)\bigr] *_t\bar{\phi}}{h} \psi(x)\,dx \\
& = \pair{d(\Delta G_\cdot) *_x \hat{u}_0}{\psi\otimes h} + \int_{\overline{\Omega}} \pair{d[\Delta G_\cdot(x)] *_t \bar{\phi}}{h} \psi(x)\,dx \\
& = \pair{d\Delta (G_\cdot *_x \hat{u}_0)}{\psi\otimes h} + \pair{d\Delta(G_\cdot *_t \bar{\phi})}{\psi\otimes h}\\
&= \pair{d\Delta \hat{u}}{\psi\otimes h}.
\end{align*}
By density of $C_c^\infty(\overline{\Omega})\otimes C^\infty_c(\R_+)$ in the space of test functions $\mathcal{D}(\overline{\Omega}\times\R_+)$ we obtain $\partial_t \hat{u}=d\Delta\hat{u}$ in the sense of distributions on $\overline{\Omega}\times\R_+$. Since both are given by (locally integrable) functions according to the first part of the proof, $\partial_t\hat{u}(t)=d\Delta \hat{u}(t)$ for almost every $t$.
\end{proof}
\begin{remark}
The estimates \eqref{eqn:bound partial G L2 Omega}--\eqref{eqn:bound second partial G L2 Omega} hinge on the fact that $\Omega$ is bounded away from $0$, where the integrand is singular.
\end{remark}
\end{appendix}


\begin{thebibliography}{99}
\bibitem{AdamsFournier}
     \newblock R.A.~Adams and J.J.F.~Fournier,
     \newblock ``Sobolev Spaces,"
     \newblock 2nd edition,  Academic Press, 2003.

\bibitem{Baluska}
    \newblock F.~Balu\v{s}ka, J.~\v{S}amaj and D.~Menzel,
    \newblock \emph{Polar transport of auxin: carrier-mediated flux across the plasma membrane or neurotransmitter-like secretion?},
    \newblock Trends in Cell Biology, \textbf{13} (2003), 282--285.

\bibitem{vBerkel}
    \newblock K.~van Berkel, R.J.~de Boer, B.~Scheres, and K.~ten Tusscher,
    \newblock \emph{Polar auxin transport: models and mechanisms},
    \newblock Development, \textbf{140} (2013), 2253--2268.
		
\bibitem{Boccardo_ea}
		\newblock L.~Boccardo, A.~Dall'Aglio, Th.~Gallou\"et, and L.~Orsina,
    \newblock \emph{Nonlinear parabolic equations with measure data},
    \newblock J. Funct. Anal., \textbf{147} (1997), 237--258.

\bibitem{Boot}
    \newblock K.J.M.~Boot, K.R.~Libbenga, S.C.~Hille, R.~Offringa and B.~van Duijn,
    \newblock \emph{Polar auxin transport: an early invention},
    \newblock Journal of Experimental Botany, \textbf{63} (2012), 4213--4218.

\bibitem{LeBris}
    \newblock E.~Canc\`{e}s and C.~Le Bris,
    \newblock \emph{Mathematical modeling of point defects in materials science},
    \newblock Mathematical Models and Methods in Applied Sciences, \textbf{23} (2013), 1795--1859.

\bibitem{Chavarria}
    \newblock A.~Chavarr\'{i}a-Krauser and M.~Ptashnyk,
    \newblock \emph{Homogenization of long-range auxin transport in plant tissues},
    \newblock Nonlinear Analysis: Real World Applications, \textbf{11} (2010), 4524--4532.

\bibitem{Denk0}
    \newblock R.~Denk, M.~Hieber and J.~Pr\"{u}ss,
    \newblock $\mathcal{R}$-boundedness, Fourier multipliers and problems of elliptic and parabolic type,
    \newblock Mem. Am. Math. Soc, \textbf{788} (2003).

\bibitem{Denk}
    \newblock R.~Denk, M.~Hieber and J.~Pr\"{u}ss,
    \newblock \emph{Optimal $L^p$-$L^q$-estimates for parabolic boundary value problems with inhomogeneous data},
    \newblock Math. Z., \textbf{257} (2007), 193--224.

\bibitem{Engel-Nagel}
		\newblock K.-J.~Engel and R.~Nagel,
		\newblock ``One Parameter Semigroups for Linear Evolution Equations",
		\newblock Springer-Verlag, New York, 2000.

\bibitem{Folland}
     \newblock G.B.~Folland,
     \newblock ``Real Analysis: Modern Techniques and Their Applications,"
     \newblock 2nd edition,  Wiley, New York, 1999.

\bibitem{Griffiths}
     \newblock D.J.~Griffiths,
     \newblock ``Introduction to Electrodynamics,"
     \newblock 3rd edition,  Pearson Education, 2008.

\bibitem{Gulikers_Jstat}
    \newblock L.~Gulikers, J.H.M.~Evers, A.~Muntean and A.V.~Lyulin,
    \newblock \emph{The effect of perception anisotropy on particle systems describing pedestrian flows in corridors},
    \newblock Journal of Statistical Mechanics: Theory and Experiment, \textbf{P04025} (2013).

\bibitem{Hille_JEE}
    \newblock S.C.~Hille,
    \newblock \emph{Local well-posedness of kinetic chemotaxis models},
    \newblock Journal of Evolution Equations, \textbf{8} (2008), 423--448.
		
\bibitem{Hille-Worm}
		\newblock S.C.~Hille, and D.T.H.~Worm,
    \newblock \emph{ Embedding of semigroups of Lipschitz maps into positive linear semigroups on ordered Banach spaces generated by measures},
    \newblock Integr. Equ. Oper. Theory, \textbf{63} (2009), 351--371.
		
\bibitem{Hirokawa_ea}
		\newblock N.~Hirokawa, S.~Niwa, Y.~Tanaka,
		\newblock \emph{Molecular motors in neurons: Transport mechanisms and roles in brain function, development, and disease},
		\newblock Neuron, \textbf{68} (2010), 610--638.

\bibitem{Jackson}
     \newblock J.D.~Jackson,
     \newblock ``Classical Electrodynamics,"
     \newblock 3rd edition,  John Wiley and Sons, 1999.
		
\bibitem{Jaeder-Nagel}
		\newblock H.M.~J\"ager and S.R.~Nagel,
     \newblock \emph{Physics of the granular state},
     \newblock Science \textbf{255} (1982), 1523-1531.
		
\bibitem{Kramer}
    \newblock E.M.~Kramer,
    \newblock \emph{Computer models of auxin transport: a review and commentary},
    \newblock Journal of Experimental Botany, \textbf{59} (2008), 45--53.

\bibitem{Lasieska}
    \newblock I.~Lasiecka,
    \newblock \emph{Unified theory for abstract boundary problems--a semigroup approach},
    \newblock Appl. Math. Optim. \textbf{6} (1980), 287--333.


\bibitem{Lions}
     \newblock J.D.~Lions, E.~Magenes,
     \newblock ``Non-Homogeneous Boundary Value Problems and Applications,"
     \newblock Springer Verlag, 1972.

\bibitem{Lin-Edwards:1997}
		\newblock Y.~Liu, R.H.~Edwards,
		\newblock \emph{The role of vesicular transport proteins in synaptic transmission and neural degeneration},
		\newblock Ann. Rev. Neurosci., \textbf{20} (1997), 125--156.

\bibitem{Merks}
    \newblock R.M.H.~Merks, Y.~Van de Peer, D.~Inz\'{e} and G.T.S.~Beemster,
    \newblock \emph{Canalization without flux sensors: a traveling-wave hypothesis},
    \newblock Trends in Plant Science, \textbf{12} (2007), 384--390.

\bibitem{vMeurs}
    \newblock P.~van Meurs, A.~Muntean and M.A.~Peletier,
    \newblock \emph{Upscaling of dislocation walls in finite domains}, preprint,
    \newblock Eur. J. Appl. Math, \emph{to appear}.

\bibitem{Mitrinovic}
     \newblock D.S.~Mitrinovi\'{c}, J.E.~Pe\v{c}ari\'{c} and A.M.~Fink,
     \newblock ``Inequalities Involving Functions and Their Integrals and Derivatives,"
     \newblock Kluwer Academic Publishers, Dordrecht, 1991.

\bibitem{Pazy}
    \newblock A.~Pazy,
    \newblock ``Semigroups of Linear Operators and Applications to Partial Differential Equations",
    \newblock Springer-Verlang, New York, 1983.	
		

\bibitem{Raven}
    \newblock J.A.~Raven,
    \newblock \emph{Polar auxin transport in relation to long-distance transport of nutrients in the Charales},
    \newblock Journal of Experimental Botany, \textbf{64} (2013), 1--9.

\bibitem{Riesz}
    \newblock M.~Riesz,
    \newblock \emph{Sur les fonction conjugu\'ees},
    \newblock Math. Zeit. \textbf{27}(1) (1928), 218--244.
		
 \bibitem{Rude}
    \newblock U.~R{\"u}de, H.~K{\"o}stler, M.~Mohr,
  \newblock \emph{Accurate multigrid techniques for computing singular solutions of elliptic problems},
  \newblock Eleventh Copper Mountain Conference on Multigrid Methods, 2003.

\bibitem{Seidman}
    \newblock T.I.~Seidman, M.K.~Gobbert, D.W.~Trott and M.~Kru\u{z}\'{\i}k,
    \newblock \emph{Finite element approximation for time-dependent diffusion with measure-valued source},
    \newblock Numer.~Math., \textbf{122} (2012), 709--723.
		
\bibitem{Solonnikov}
    \newblock V.A.~Solonnikov, ,
    \newblock \emph{On boundary value problems for linear parabolic systems of differential equations of general form},
    \newblock Trudy Mat. Fust. Steklov \textbf{83} (1965), 3--163 (Russian).
	\newblock Engl. Transl.: Proc. Steklov Inst. Math. \textbf{83} (1965), 1--184.

\bibitem{Stein}
    \newblock E.M.~Stein,
    \newblock ``Singular Integrals and Differentiability Properties of Functions",
    \newblock Princeton University Press, Princeton, New Jersey, 1970.			

				
\end{thebibliography}
\end{document}